\def\N{\mathbb{N}}
\def\Z{\mathbb{Z}}
\newtheorem{thm}{Theorem}
\newtheorem*{thm*}{Theorem}
\newtheorem*{claim*}{Claim}
\newtheorem*{dfn*}{Definition}
\newtheorem{lemma}[thm]{Lemma}
\newtheorem*{lemma*}{Lemma}
\newtheorem{prop}[thm]{Proposition}
\newtheorem*{prop*}{Proposition}
\newtheorem{cor}[thm]{Corollary}
\newtheorem*{cor*}{Corollary}
\newtheorem*{conj*}{Conjecture}
\theoremstyle{remark}
\newtheorem*{rmk*}{Remark}
\newtheorem*{rmks*}{Remarks}
\newcommand{\notlinkgraph}{
{\tikzstyle{every node}=[circle, draw, fill=black!20, inner sep=0pt, minimum width=3pt]
\begin{tikzpicture}[thick,baseline=-0pt]
\node at(0.0,0.0)(0){}; \node at(0.2,0.1)(1){}; \node at(0.0,0.2)(2){}; \node at(0.4,0.2)(3){}; 
\node at(0.4,0.0)(4){}; \node at(0.6,0.1)(5){}; \node at(0.8,0.2)(6){}; \node at(0.8,0.0)(7){}; 
\draw { (0)--(1)--(4)--(5)--(7)  (1)--(2)  (3)--(4)  (5)--(6)};
\end{tikzpicture}}}
\title{Triply Existentially Complete \\ 
Triangle-Free Graphs}
\author{
Chaim Even-Zohar\thanks{
Department of Mathematics, Hebrew University, Jerusalem 91904, Israel. \newline
e-mail: \href{mailto:chaim.evenzohar@mail.huji.ac.il}{chaim.evenzohar@mail.huji.ac.il}~.}
\and  
Nati Linial\thanks{
Department of Computer Science, Hebrew University, Jerusalem 91904, Israel.\newline
e-mail: \href{mailto:nati@cs.huji.ac.il}{nati@cs.huji.ac.il}~.
Supported by ISF and BSF grants.}
}
\begin{document}

\maketitle

\begin{abstract}
A triangle-free graph $G$ is called \emph{$k$-existentially complete} if 
for every induced $k$-vertex subgraph $H$ of $G$, 
every extension of $H$ to a $(k+1)$-vertex triangle-free graph can be realized 
by adding another vertex of $G$ to $H$.
Cherlin~\cite{cherlin1993,cherlin2011} asked whether
$k$-existentially complete triangle-free graphs exist for every $k$.
Here, we present known and new constructions of $3$-existentially complete triangle-free graphs.
\end{abstract}

\section{Introduction}\label{introsect}

It is well known that the Rado graph $R$~\cite{erdos1963,rado1964} 
is characterized by being existentially complete. 
Namely, for every two finite disjoint subsets of vertices $A, B\subset V(R)$ 
there is an additional vertex $x$ that is adjacent to all vertices in $A$ and to none in $B$. 
Many variations on this theme suggest themselves, 
specifically concerning finite and $H$-free graphs. 
First, the aforementioned extension property of $R$ suggests 
a search of small finite graphs that satisfy this condition whenever $|A|+|B| \le k$.
It is known that Paley graphs and finite random graphs of order $\exp(O(k))$ 
have this property~\cite{blass1981,bollobas1981,cameron2002,erdos1963,bonato2009}.  

The Rado graph is also {\em homogeneous}, that is to say, every 
isomorphism between two of its finite subgraphs can be extended to an automorphism of $R$. 
Henson~\cite{henson1971} has discovered 
the generic infinitely countable {\em triangle-free} graph $R_3$ that is homogeneous as well. 
It is uniquely defined by the following property.
Given two finite disjoint subsets $A, B\subset V(R_3)$ with $A$ {\em independent},
there is an additional vertex $x$ that is adjacent to all vertices in $A$ and to none in $B$.

The graph $R_3$ suggests an extremely interesting question that was
raised and studied by Cherlin~\cite{cherlin1993,cherlin2011}.
Namely, do there exist finite graphs with a similar property? 
We say that a triangle-free graph is $k$-{\em existentially complete}
if it satisfies this condition whenever $|A|+|B| \le k$.
Are there $k$-existentially complete triangle-free graphs for every $k$?
A weaker variant of this question had been raised by Erd\H{o}s and Pach~\cite{pach1981,erdos1985}. 
Similar questions for graphs and other combinatorial structures appear in \cite{alspach1991, bonato2000, bonato2001}. 
In the literature one occasionally finds the shorthand \emph{$k$-e.c.} as well as the alternative term \emph{$k$-existentially closed}.

Cherlin's question can be viewed as an instance of a much wider subject in graph theory, 
namely understanding the extent to which the local behavior of infinite graphs 
can be emulated by finite ones.
Here are some other instances of this general problem.

The infinite $d$-regular tree $\mathbb{T}_d$ is the ultimate $d$-regular expander 
in at least two senses. 
It has the largest possible number of edges emanating from every finite set of vertices. 
It also has the largest spectral gap that a $d$-regular graph can have. 
This observation suggests the search for finite arbitrarily large Ramanujan graphs, 
and for the limits on expansion in finite $d$-regular graphs. 
$\mathbb{T}_d$ is also, of course, acyclic that leads to the question 
how large the girth can be in a finite $d$-regular graph. 

There are several examples of local conditions that 
can be satisfied in an infinite graph but not in a finite one. 
A nice example comes from an article of Blass, Harari and Miller~\cite{blass1980}. 
They define the link of a vertex to be the subgraph induced by its neighbors, 
and consider graphs in which all links are isomorphic to some fixed graph $H$. 
As they observe, there is an infinite graph with all links isomorphic to $H=$\notlinkgraph~, 
but this is impossible for finite graphs. 

For all the currently known $k$-existentially complete triangle-free graphs, 
the parameter $k$ is bounded by $3$. 
The question is wide open for $k \geq 4$.
The known finite graphs with the $R$-like extension property 
do not seem adjustable to the triangle-free case. 
In fact we tend to believe that there is some absolute constant $k_0$ 
such that no triangle-free graph is $k_0$-existentially complete.

For general $p \geq 3$, Henson's universal countable $K_p$-free graph $R_p$ 
is defined by the analogous extension property,
where the subset $A$ is only required to be $K_{(p-1)}$-free. 
There is a simple connection between 
the existential completeness properties of $K_p$-free graphs for different values of $p$. 
Thus, if, as we suspect, no finite $k_0$-existentially complete triangle-free graphs exist, 
then no finite $K_4$-free graph can be $(k_0+1)$-existentially complete, etc. 
This follows since the link of every vertex in 
a $(k+1)$-existentially complete $K_p$-free graph is 
a $k$-existentially complete $K_{(p-1)}$-free graph. 
Finite random $(p-1)$-partite graphs provide an easy lower bound,
since they are $K_p$-free and $(p-2)$-existentially complete.

Cherlin gives several examples of 
finite $3$-existentially complete triangle-free graphs, or shortly 3ECTF,
and asks which additional properties they can have.
In particular, for what $\mu_2$ it is possible that 
every $2$ nonadjacent vertices have at least $\mu_2$ common neighbors?
Similarly, can every independent set of $3$ vertices have $\mu_3 > 1$ common neighbors?
Maybe the search for more robust examples for finite 3ECTF graphs,
would shed light on the question of finite 4ECTF graphs and beyond.

The main purpose of this article is to investigate Cherlin's examples and construct some more 3ECTF graphs.
We also study and extend a construction by Erd\H{o}s~\cite{erdos1966} and Pach~\cite{pach1981}.
For $n$ even we show that there are at least $2^{n^2/(16+o(1))}$ 3ECTF graphs on $n$ vertices. 
We show (Corollary~\ref{many1}) that in these graphs the average degree can be as small as $O(\log n)$ and as high as $n/4$.
For all $\mu_2$, we find $2^{\Omega(n^2)}$ distinct 3ECTF graphs of order $\le n$ in
which every two nonadjacent vertices have at least $\mu_2$ common neighbors (Corollary~\ref{many2}).
However, some independent triplets in these graphs may have only one common neighbor.

The constructions are presented in three main phases.
In each of Sections~\ref{albertsect}, \ref{hypersect} and \ref{twistsect} we describe a basic construction, which we then extend in various ways.
Table~\ref{3ECTFtable} exhibits three main parameters of the graph families under discussion:
the number of vertices, the vertex degrees, and $\mu_2$, the minimum number of common neighbors of nonadjacent pairs.

\begin{table}[h]
\centering\renewcommand{\arraystretch}{2}\begin{tabular}{l l c c c}
 & \textbf{Graph} & \textbf{Vertices} & \textbf{Degrees} & \textbf{$\mu_2$} \\
\hline
\hline
\textbf{Albert} 
& $A(n)$, $n\geq 4$~~(\cite{cherlin2011}, Section 13.1) & $4n$ & $n+1$ & 2 \\
& $A_M$, $M \in \{0,1\}^{m \times n}$ * & $2m+2n$ & $m+1,n+1$ & 2 \\
\hline
\textbf{Hypercube} 
& $C_{3k+1}$~~(Erd\H{o}s~\cite{erdos1966}, Pach~\cite{pach1981}) & $2^{3k+1}$ & $\binom{3k}{k}$ ** & $\binom{2k}{k}$ \\
& $C_{3k-1}(m)$, $k \geq 1$, $m \geq 4$ & $m2^{3k-1}$ & $m\binom{3k}{k}$ ** & $\binom{2k}{k}$ \\
& $C_{k,j}$, $1 \leq j \leq k$ & $2^{3k+j}$ & $\binom{3k+j}{k+j}$ ** & $2\binom{2k-1}{k-j}$ \\
\hline
\textbf{Twisted} 
& $G(m_0,...,m_3)$, $m_i \geq 2$ & $4\sum_im_i$ & $\sum_im_i+1$ & $2$ \\
& $G_T(m,k)$, $T$ tournament* & $|T|m2^{3k-1}$ & $|T|m\binom{3k}{k}$ ** & $\binom{2k}{k}$ \\
\hline
\hline
\multicolumn{5}{l}{\footnotesize{* For asymptotically almost every such $M$ or $T$.~~~** Upto a multipilcative constant, for $k \rightarrow \infty$.}} \\
\end{tabular}
\caption{Constructions of 3ECTF graphs}
\label{3ECTFtable}
\end{table}

Although this work is meant to be self-contained, 
the reader is encouraged to consult part III of Cherlin's article~\cite{cherlin2011} 
for more background of the subject and many additional details.
To simplify matters we maintain a graph-theoretic terminology, 
and refrain from using Cherlin's view of maximal triangle-free graphs 
as combinatorial geometries.
We believe that this presentation makes
the structure and the symmetries of the graphs more transparent.

Notation: We denote the fact that vertices $x, y$ are adjacent in the graph under discussion 
by $x \sim y$.
The \emph{neighborhood} of $x$ is $N(x) = \{y \in V | x \sim y\}$.

\section{Existentially Complete Graphs}\label{ecgsect}

Here are some definitions and useful reductions that are due to Cherlin.

\begin{dfn*}[Extension Properties, Section 11.1 of~\cite{cherlin2011}]
Let $k$ be a positive integer.
The following properties of a triangle-free graph $G=(V,E)$ are defined thus: 
\begin{enumerate}
\item \emph{$(E_k)$, [also known as $k$-existentially complete]:}
For every $B \subseteq A \subseteq V$ where $|A|\le k$ and $B$
is independent, there exists a vertex $v$ 
that is adjacent to each vertex of $B$ and to no vertex of $A \setminus B$.
\item \emph{$(E_k')$:}
For every $B \subseteq A \subseteq V$ where $A$ is independent of cardinality {\em exactly} $k$
there exists a vertex $v$, adjacent to each vertex of $B$ and to no vertex of $A \setminus B$.
\\ Also, every independent set with fewer than $k$ vertices 
is contained in an independent set of cardinality $k$. 
\item \emph{$(Adj_k)$:}
Every independent set of cardinality $\le k$ has a common neighbor.
\end{enumerate}
\end{dfn*}

An $(Adj_2)$ graph is also called \emph{maximal triangle-free},
since the addition of any edge would create a triangle.
In other words, the graph has diameter $\le 2$.
In a \emph{twin-free} graph no two vertices have the same neighborhood.

We have the following implications for triangle-free graphs 
(Cherlin~\cite{cherlin2011}, Lemmas 11.2-11.4).

\begin{lemma}\label{ext1}
For $k \geq 2$, the properties $(E_k)$ and $(E_k')$ are equivalent.
\end{lemma}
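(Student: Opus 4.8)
The plan is to establish the two implications separately, the routine one being $(E_k)\Rightarrow(E_k')$. The first half of $(E_k')$—realizing the extension for an independent $A$ with $|A|=k$ exactly—is literally a special case of $(E_k)$: an independent $A$ satisfies $|A|=k\le k$ and any $B\subseteq A$ is independent. For the second half, given an independent set $I$ with $|I|<k$ I would apply $(E_k)$ to $A=I$ and $B=\emptyset$; the resulting vertex is nonadjacent to all of $I$, and since the extension property intends a genuinely new vertex it lies outside $I$, so $I$ grows by one. Iterating reaches size $k$, and $k\ge 2$ is exactly what guarantees there is room to grow.

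For $(E_k')\Rightarrow(E_k)$ I would start from an arbitrary $B\subseteq A$ with $B$ independent and $|A|\le k$, and reduce to the independent, size-exactly-$k$ case that $(E_k')$ handles directly. The size is the easy reduction: once $A$ is independent with $|A|=j<k$, the extension clause of $(E_k')$ enlarges $A$ to an independent set $A^{*}$ of size $k$ with $B\subseteq A\subseteq A^{*}$, and the first half of $(E_k')$ applied to $(A^{*},B)$ yields a $v$ with $v\sim B$ and $v$ nonadjacent to $A^{*}\setminus B\supseteq A\setminus B$. So the whole game is to reduce to the situation where $A$ is independent.

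The first step of that reduction uses triangle-freeness directly: if $c\in A\setminus B$ has a neighbour in $B$, then any $v$ with $v\sim B$ automatically has $v\not\sim c$ (otherwise a triangle appears), so I may delete such $c$ from $A$. After this, the only edges left inside $A$ join two vertices of $A\setminus B$ that are both nonadjacent to $B$. To eliminate such an edge $c\sim c'$ I would again use triangle-freeness through the fact that $N(c')$ is an independent set containing $c$, so every other neighbour $x$ of $c'$ satisfies $x\not\sim c$. Picking such an $x$ lying outside $A$ and nonadjacent to $B$, I replace the forbidden vertex $c'$ by the required vertex $x$: then $v\sim x$ forces $v\not\sim c'$, and I obtain an instance with the same budget but strictly fewer internal edges. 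Induction on the number of edges inside $A$ then completes the reduction to independent $A$.

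The hard part is precisely this last move. Producing a suitable $x$—a neighbour of $c'$ that avoids $c$, the remaining vertices of $A$, and $N(B)$, while keeping the required set independent and within $k$ vertices—is not automatic for an arbitrary triangle-free graph; it must be extracted from the richness that $(E_k')$ forces, together with the extension clause, and the degenerate low-degree cases (when $c$ and $c'$ have no useful extra neighbours) have to be cleared separately. This is already visible at $k=2$, where the only genuine difficulty is finding a common nonneighbour of an adjacent pair $c\sim c'$, and an isolated edge is handled by observing that $(E_k')$ forbids the graph from being a single edge. Keeping the bookkeeping consistent throughout the induction—size at most $k$, independence of the enlarged required set, and the witness remaining a new vertex—is where the care lies, and is the reason the equivalence is asserted only for $k\ge 2$.
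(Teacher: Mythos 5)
The paper itself gives no proof of this lemma; it is quoted verbatim from Cherlin (Lemmas 11.2--11.4 of \cite{cherlin2011}), so there is no in-paper argument to compare against and your proposal must stand on its own. The easy parts are fine: $(E_k)\Rightarrow(E_k')$ is a specialization plus iterated extension (granting the standard reading, used in the abstract, that the witness is a vertex outside $A$), and on the converse side both reductions --- discarding vertices of $A\setminus B$ that have a neighbour in $B$, and padding an independent $A$ up to size exactly $k$ before invoking the first clause of $(E_k')$ --- are correct uses of triangle-freeness and of the second clause.

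The genuine gap is the step you yourself label ``the hard part'', and it is not a technicality: it is where essentially all of the content of $(E_k')\Rightarrow(E_k)$ lives. To kill an edge $c\sim c'$ inside $A\setminus B$ you need a neighbour $x$ of $c'$ with $x\not\sim B$, with $B\cup\{x\}$ still independent, and crucially with $x\notin A\setminus B$; otherwise the swap converts a vertex that must be avoided into one that must be hit, and a solution of the new instance no longer solves the old one. The natural way to extract such an $x$ from $(E_k')$ --- extend the independent set $B\cup\{c'\}$ to size $k$ and request a vertex adjacent to $c'$ and to nothing else in the extension --- does yield some $x\sim c'$ with $x\not\sim B$, but nothing prevents that $x$ from being another forbidden vertex of $A\setminus B$ adjacent to $c'$: such a vertex cannot be placed inside the independent extension precisely because it is adjacent to $c'$, so the extension cannot be used to exclude it. Ruling this out, and preserving the invariants through the induction, needs a further idea that the proposal does not supply. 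Your $k=2$ discussion shows the same hole in concrete form: the obstruction to finding a common non-neighbour of an adjacent pair $c\sim c'$ is not that the graph might be a single edge, but that the edge might be dominating ($N(c)\cup N(c')=V$); excluding this forces the graph to be complete bipartite via $(Adj_2)$ and then contradicts the twin-freeness consequences of $(E_2')$ --- an argument your ``not a single edge'' remark does not cover. So the plan is reasonable and probably close to Cherlin's, but its central existence claim is asserted rather than proved.
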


\begin{lemma}\label{ext2}
For $k \geq 3$, property $(E_k)$ is equivalent to the conjunction of $(Adj_k)$ and $(E_3)$.
\end{lemma}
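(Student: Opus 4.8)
The plan is to prove the two implications separately, with essentially all of the work concentrated in the reverse direction. The forward implication $(E_k)\Rightarrow(Adj_k)\wedge(E_3)$ is immediate: taking $B=A$ in $(E_k)$ for an independent set $A$ of size $\le k$ produces a common neighbor, which is exactly $(Adj_k)$; and restricting $(E_k)$ to sets $A$ with $|A|\le 3\le k$ yields $(E_3)$ verbatim. So I would only record these two observations and move on.

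For the reverse implication $(Adj_k)\wedge(E_3)\Rightarrow(E_k)$, I would argue by induction on $k$, the base case $k=3$ being supplied trivially by $(E_3)$. At stage $k$ I may use the full strength of $(E_{k-1})$, which the inductive hypothesis grants since $(Adj_k)$ implies $(Adj_{k-1})$ and $k-1\ge 3$. Given a prescribed type — an independent set $B$ together with a set $C=A\setminus B$ with $|B\cup C|\le k$ — I seek a vertex $v$ with $v\sim B$ and $v\not\sim c$ for all $c\in C$. Several cases collapse at once: if $C=\emptyset$ this is $(Adj_k)$; if $|B\cup C|<k$ it is $(E_{k-1})$; and, crucially, whenever some $c\in C$ is adjacent to some $b\in B$, triangle-freeness forces $v\not\sim c$ automatically for \emph{any} common neighbor $v$ of $B$, so such a $c$ imposes no genuine constraint and may be discarded. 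The mechanism behind this last remark is the elementary but decisive observation that, in order to force $v\not\sim c$, it suffices to force $v\sim c'$ for some neighbor $c'$ of $c$, since a triangle $v\,c'\,c$ is then forbidden. This lets me trade each non-adjacency requirement for an adjacency requirement, after which $(Adj_k)$ can discharge all of them simultaneously — provided the enlarged adjacency target can be kept independent.

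After these reductions the essential case, which I expect to be the main obstacle, is a fully independent $A=B\sqcup\{c\}$ of size exactly $k$: I must produce a common neighbor of $B$ that avoids the single further vertex $c$. By the trick above this reduces to selecting a neighbor $c'$ of $c$ that is non-adjacent to all of $B$, for then $B\cup\{c'\}$ is independent of size $k$ and $(Adj_k)$ supplies a common neighbor $v$ of $B\cup\{c'\}$ with $v\not\sim c$ for free. The difficulty is that choosing such a $c'$ is itself a realization problem of full size $k$, so the induction cannot be unwound naively; moreover the natural strategy of repairing a partial solution produced by $(E_{k-1})$ runs into the triangle-free \emph{merge obstruction} — two vertices that share a common neighbor are themselves non-adjacent — which blocks any attempt to amalgamate two vertices each covering only part of $B$. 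I would therefore handle this case by peeling a single vertex off the type, invoking $(E_{k-1})$ to realize the remaining $k-1$ prescriptions, and then using $(E_3)$ together with the diameter-$\le 2$ property (a consequence of $(Adj_2)$) to correct the one remaining discrepancy. Making this single-vertex correction compatible with all $k-1$ prescribed adjacencies at once is the delicate heart of the argument, and is where I expect to spend most of the effort. Throughout I may pass freely between $(E_k)$ and $(E_k')$ by Lemma~\ref{ext1} whenever it is convenient to assume $A$ independent of size exactly $k$.
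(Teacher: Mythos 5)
The paper does not actually prove this lemma---it is quoted from Cherlin (Lemmas 11.2--11.4 of~\cite{cherlin2011})---so I can only judge your argument on its own merits, and as it stands it is not a proof. Your forward direction and your general framework (induction on $k$, reduction via Lemma~\ref{ext1} to independent $A$ of size exactly $k$, the observation that forcing $v\sim c'$ for some $c'\sim c$ forces $v\not\sim c$) are all sound. But there are two genuine gaps. First, your reductions do not deliver the case you declare to be ``the essential case'': discarding those $c\in A\setminus B$ that are adjacent to some $b\in B$, and disposing of $|A|<k$ and $A\setminus B=\emptyset$, still leaves a fully independent $A$ of size $k$ with $|A\setminus B|$ \emph{arbitrary}, not $|A\setminus B|=1$; nothing you say reduces the codimension to one. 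Second, and more seriously, the case you do isolate is never actually proved: you correctly diagnose the circularity (finding $c'\sim c$ with $c'\not\sim B$ is itself a size-$k$ realization problem) and the merge obstruction, but then only announce a strategy (``peel off one vertex, realize the rest by $(E_{k-1})$, correct the one discrepancy using $(E_3)$ and diameter $2$'') and explicitly defer ``the delicate heart of the argument.'' That deferred step is the entire content of the lemma, and the proposed mechanism does not obviously work: $(E_3)$ controls only three vertices at a time, so a local correction cannot be made compatible with $k-1$ prescribed adjacencies; any vertex forced to be adjacent to an intermediary becomes non-adjacent to all of that intermediary's neighbors, so adjacency requirements cannot be repaired through intermediaries at all.

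For the record, the circularity can be broken roughly as follows. Write $C=A\setminus B$. If $|C|\ge 3$, for each $c\in C$ use $(E_{k-1})$ on the $(k-1)$-set $A\setminus\{c\}$ to get $u_c$ adjacent to all of $C\setminus\{c\}$ and to nothing in $B$; any two such $u_c,u_{c'}$ share a common neighbor in $C\setminus\{c,c'\}$ (here $|C|\ge3$ is used) and are therefore non-adjacent, so $B\cup\{u_c: c\in C\}$ is independent of size at most $k$, and a common neighbor supplied by $(Adj_k)$ realizes the type. If $1\le|C|\le 2$, pass to the complementary type $(A,\,A\setminus B)$ --- a realizer $w$ of it makes $B\cup\{w\}$ independent, and $(Adj_k)$ then realizes $(A,B)$ --- which has codimension $|B|=k-|C|\ge 3$ except in the single balanced case $k=4$, $|B|=|C|=2$, which needs one further ad hoc step. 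None of this machinery (complementation through $(Adj_k)$, and the pairwise-intersection device that keeps the auxiliary realizers independent) appears in your proposal, so the argument is incomplete precisely where the work lies.
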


\begin{lemma}\label{ext3}
A graph has property $(E_2)$ iff it is maximal triangle-free, twin-free, and contains an anti-triangle.
\end{lemma}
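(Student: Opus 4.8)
The plan is to prove both implications, with the reverse direction doing the real work. I read $(E_2)$ as asking, for $A$ with $|A|\le 2$ and independent $B\subseteq A$, for a vertex $v\notin A$ adjacent to all of $B$ and to none of $A\setminus B$. Unwinding the possible sizes of $A$ and $B$ turns $(E_2)$ into a finite checklist: (R1) every vertex has a neighbor; (R2) every vertex has a non-neighbor; (R3) every non-adjacent pair has a common neighbor; (R4) for every ordered pair $a\ne b$ there is a vertex adjacent to $a$ but not to $b$; and (R5) every pair $a,b$ has a common non-neighbor. The whole problem is to match this list against the three named conditions.

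For the forward direction I would simply feed suitable pairs $(A,B)$ into $(E_2)$. Taking $B=A$ gives exactly $(Adj_2)$, hence maximal triangle-freeness. Taking $A=\{x,y\}$ with $N(x)=N(y)$ and $B=\{x\}$ yields a vertex adjacent to $x$ but not $y$, which is impossible for twins, so $G$ is twin-free. Finally, applying the non-neighbor instance $B=\emptyset$ first to a single vertex $a$ and then to the resulting non-adjacent pair produces three mutually non-adjacent vertices, i.e.\ an anti-triangle.

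For the reverse direction, R1 and R3 are literally $(Adj_2)$. For R2, a vertex $a$ adjacent to everything would have independent neighborhood $V\setminus\{a\}$, forcing every other vertex to have neighborhood $\{a\}$ and hence to be twins, which twin-freeness forbids once $|V|\ge 3$ (guaranteed by the anti-triangle). R4 splits on whether $a\sim b$: if $a\sim b$ then $N(a)\cap N(b)=\emptyset$, so any neighbor of $a$ other than $b$ automatically avoids $b$, and $N(a)=\{b\}$ is excluded by R2 applied to $b$ via $(Adj_2)$; if $a\not\sim b$ then $N(a)\subseteq N(b)$ is impossible, since equality is twinhood while a proper containment lets me pick $v\in N(b)\setminus N(a)$ whose common neighbor with $a$ (from $(Adj_2)$) lies in $N(a)\subseteq N(b)$ and closes a triangle with $v$ and $b$.

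The hard part is R5, the \emph{adjacent to neither} type, and I expect it to be the only genuine obstacle. The naive attempts fail: the anti-triangle may lie entirely inside $N(a)$, so it is useless as a direct witness, and chasing common neighbors stalls because the only common neighbor of two auxiliary vertices can be forced to be $a$ or $b$ itself. Instead I would argue by rigidity. Suppose $a,b$ have no common non-neighbor, so every other vertex lies in $N(a)\cup N(b)$; write $P=N(a)\setminus N(b)$, $Q=N(b)\setminus N(a)$, $R=N(a)\cap N(b)$. Triangle-freeness confines all edges among $V\setminus\{a,b\}$ to run between $P$ and $Q$, and then maximal triangle-freeness forces this $P$--$Q$ bipartite graph to be \emph{complete}, since a non-adjacent $P$--$Q$ pair would have no common neighbor anywhere. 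Now twin-freeness collapses everything: all of $R$ shares the neighborhood $\{a,b\}$, all of $P$ shares $\{a\}\cup Q$, and all of $Q$ shares $\{b\}\cup P$, so $|P|,|Q|,|R|\le 1$. Hence $G$ has at most five vertices and independence number at most $2$, contradicting the anti-triangle. The adjacent case is the same argument with $R=\emptyset$, where the structure degenerates to a complete bipartite graph and twin-freeness again forces $|V|\le 2$; in both cases the anti-triangle supplies the final contradiction.
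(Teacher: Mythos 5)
Your proof is correct. Note that the paper itself supplies no proof of this lemma: it is quoted verbatim from Cherlin (Lemmas 11.2--11.4 of the cited work), so there is no in-paper argument to compare against. Your unwinding of $(E_2)$ into the five instance types (R1)--(R5) is the right reading (the witness $v$ must lie outside $A$, as the abstract's ``adding another vertex'' confirms), and the forward direction is routine. The substance is indeed (R5), and your rigidity argument there is sound: under the assumption that $a,b$ have no common non-neighbor, triangle-freeness confines all edges of $V\setminus\{a,b\}$ to run between $P=N(a)\setminus N(b)$ and $Q=N(b)\setminus N(a)$, maximality forces that bipartite graph to be complete (a non-adjacent $P$--$Q$ pair really has no candidate common neighbor, since the neighborhoods $\{a\}\cup Q'$ and $\{b\}\cup P'$ are disjoint), and twin-freeness then pins $|P|,|Q|,|R|\le 1$; a short case check confirms the resulting graph has no independent triple, contradicting the anti-triangle, and the adjacent case degenerates to a complete bipartite graph where $a$ is a twin of every vertex of $Q$. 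The only places where you are terse but still correct are the exclusion of $N(a)=\{b\}$ in (R4) (which does follow from (R2) plus $(Adj_2)$ as you indicate) and the claim that the five-vertex configuration has independence number two, which holds but deserves the one-line enumeration. It is also worth observing that your argument implicitly re-derives why $O_5$ and $O_2$ are the exceptional maximal triangle-free twin-free graphs without anti-triangles, matching the remark preceding Corollary~\ref{eck}.
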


In order to bridge between Lemmas~\ref{ext2} and~\ref{ext3},
Cherlin investigates triangle-free graphs that are $(Adj_3)$ and $(E_2)$ but not $(E_3)$.
In Sections 11.4-11.5 of~\cite{cherlin2011} 
these exceptions are described in terms of linear combinatorial geometries,
which can be interpreted as graphs of a certain circular structure.
In general, \emph{circular} graphs can be defined as 
a set of arcs in a cyclically ordered set,
where adjacency means disjointness of the corresponding arcs.
Such graphs, defined by disjointedness in a family of subsets, 
are sometimes called (general) Kneser graphs (e.g.~\cite{matousek2003}).

Here and below we only consider finite graphs.
The circular graph $O_{3n-1}$ is
formed by all arcs of $n$ consecutive elements in $\Z_{3n-1}$.
It was independently introduced several times, e.g.,
Erd\H{o}s and Andr\'asfai~\cite{andrasfai1962}, 
Woodall~\cite{woodall1973},
Pach~\cite{pach1981},
and van den Heuvel~\cite{heuvel1993}.
In terms of rational complete graphs~\cite{hell2004}, 
$O_{3n-1}$ is equivalent to $K_{(3n-1)/n}$, being ''almost'' a triangle.
As shown in~\cite{pach1981,brouwer1995}, 
these are the only finite triangle-free twin-free graphs 
where every independent set has a common neighbor.

By Lemma 11.15 of~\cite{cherlin2011}, the circular graphs $\{O_{3n-1}\}_{n \geq 3}$ are 
the only finite triangle-free graphs that are $(Adj_3)$ and $(E_2)$ but not $(E_3)$.
Moreover, the $5$-cycle $O_5$ and the edge $O_2$ are 
the only maximal triangle-free graphs that are twin-free and do not contain an anti-triangle.
Taking into account these exceptions, Lemmas~\ref{ext2} and~\ref{ext3} easily imply the following corollary.

\begin{cor}[Cherlin, Sections 11.1--11.5 of~\cite{cherlin2011}]\label{eck}
For $k \geq 3$,
a finite triangle-free graph $G$ is $k$-existentially complete
if and only if the following conditions hold.
\begin{enumerate}
\item Every independent set of cardinality $\le k$ has a common neighbor.
\item There do not exist two vertices $x, y$ with $N(x) = N(y)$.
\item $G$ is not isomorphic to $O_{3n-1}$ for $n \geq 1$.
\end{enumerate}
\end{cor}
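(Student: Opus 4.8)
The plan is to reduce the statement to Lemmas~\ref{ext2} and~\ref{ext3} and then to account for the exceptional circular graphs. First I would note that condition (1) is literally property $(Adj_k)$, and that by Lemma~\ref{ext2} property $(E_k)$ is, for $k \geq 3$, equivalent to the conjunction of $(Adj_k)$ and $(E_3)$. Hence it suffices to prove that, under the standing hypothesis $(Adj_k)$ — which for $k \geq 3$ entails $(Adj_3)$ and also $(Adj_2)$, i.e.\ maximality as a triangle-free graph — property $(E_3)$ is equivalent to the conjunction of conditions (2) and (3). Throughout I would use freely that $(E_3)$ implies $(E_2)$ (restrict attention to sets with $|A| \leq 2$), and that $(E_2)$ is, by Lemma~\ref{ext3}, the same as being maximal triangle-free, twin-free, and possessing an anti-triangle.

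For the forward direction I would argue that if $G$ is $(E_3)$ then it is $(E_2)$, so Lemma~\ref{ext3} immediately yields condition (2) and furnishes an anti-triangle in $G$. The anti-triangle rules out the two circular graphs $O_2$ and $O_5$, since neither the edge nor the pentagon contains an independent triple. Since $\{O_{3n-1}\}_{n \geq 3}$ are precisely the triangle-free graphs that are $(Adj_3)$ and $(E_2)$ yet fail $(E_3)$, and $G$ does satisfy $(E_3)$, we also get $G \neq O_{3n-1}$ for every $n \geq 3$. Combining these observations, $G$ is isomorphic to no $O_{3n-1}$ with $n \geq 1$, which is condition (3).

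For the reverse direction I would assume conditions (2) and (3). Maximality (from $(Adj_2)$) together with twin-freeness (condition (2)) brings $G$ under Cherlin's classification, whose only twin-free maximal triangle-free exceptions lacking an anti-triangle are $O_2$ and $O_5$; these are excluded by condition (3), so $G$ must contain an anti-triangle and is therefore $(E_2)$ by Lemma~\ref{ext3}. Being $(Adj_3)$ and $(E_2)$, $G$ is then either $(E_3)$ or one of the exceptional graphs $O_{3n-1}$ with $n \geq 3$; the latter option is again killed by condition (3), leaving $(E_3)$. A final application of Lemma~\ref{ext2} then upgrades $(Adj_k) \wedge (E_3)$ back to $(E_k)$.

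The only delicate point, and where I would take most care, is the bookkeeping around the family $\{O_{3n-1}\}_{n \geq 1}$. It splits into the ``large'' members $n \geq 3$, which are ruled out directly through the $(E_3)$-versus-$(E_2)$ dichotomy of Cherlin's Lemma~11.15, and the two ``small'' members $O_2$ (the edge, $n=1$) and $O_5$ (the pentagon, $n=2$), which carry no anti-triangle and are therefore ruled out by the anti-triangle clause of $(E_2)$ in Lemma~\ref{ext3}. The main thing to verify is that condition (3), by excluding the whole family in one stroke, matches exactly the union of these two separately handled cases, so that no graph is erroneously admitted or excluded.
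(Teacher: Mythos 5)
Your proposal is correct and follows essentially the same route the paper indicates: reduce via Lemma~\ref{ext2} to $(Adj_k)\wedge(E_3)$, characterize $(E_2)$ via Lemma~\ref{ext3}, and dispose of the exceptional family $\{O_{3n-1}\}$ by splitting it into $O_2,O_5$ (no anti-triangle) and $n\ge 3$ (Cherlin's Lemma~11.15 on $(Adj_3)\wedge(E_2)$ without $(E_3)$). The bookkeeping you flag as the delicate point is handled correctly, and it is exactly the bookkeeping the paper has in mind when it says the corollary follows "taking into account these exceptions."
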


In Section 12 of~\cite{cherlin2011}, the notion of $(Adj_k)$ is refined to
the $k$-th \emph{multiplicity} of $G$. 
This is done by considering 
the smallest possible number of common neighbors of an independent set of $k$ vertices:
$$ \mu_k(G) \;=\; \min\limits_{\substack{A \subseteq V(G)\text{ independent} \\ |A|=k}}
\# \{b \in V(G) \;|\; N(b) \supseteq A\} $$
Cherlin proves the following chain of implications for 3ECTF graphs:
$$ 
\mu_4(G) \geq 1 \;\Rightarrow\;
\mu_3(G) \geq 5 \;\Rightarrow\;
\mu_3(G) \geq 2 \;\Rightarrow\;
\mu_2(G) \geq 5 \;\Rightarrow\;
\mu_2(G) \geq 2
$$

In these terms, the strongest known example is 
the strongly regular Higman--Sims graph, on $100$ vertices,
for which $\mu_3(HS)=2$ and $\mu_2(HS)=6$.
However, by a beautiful spectral calculation (\cite{cherlin2011}, Section 12.3)
no strongly regular graph has property 4ECTF.
Perhaps the Higman--Sims construction should be viewed as a sporadic example. 
On the other hand, here we introduce a large collection of 3ECTF graphs 
with arbitrarily large $\mu_2(G)$ and $\mu_3(G) = 1$. 
Thus the two lowest levels in this hierarchy are not very restrictive.

\section{Albert Graphs}\label{albertsect}

We turn to describe an infinite sequence of 3ECTF graphs 
which Cherlin attributes to Michael Albert.
The Clebsch graph is a triangle-free strongly regular graph on 16 vertices.
It can be represented as the union of four $4$-cycles,
where each vertex is adjacent as well to its antipodes in the other cycles.
One can check directly that this graph is $3$-existentially complete,
e.g. by Corollary~\ref{eck}.
Albert's construction is the extension of the
Clebsch graph to any number of $4$-cycles.
Formally, Albert's 3ECTF graphs sequence $A(n)$ is defined by
\begin{align*}
&V(A(n)) = \{(i,x) \;|\; i \in \{1,2,...,n\}, x \in \Z_4\} \\
&(i,x) \sim (i, x + 1) \;\; \text{ for all } i,x \\
&(i,x) \sim (i', x + 2) \;\; \text{ for all } x,i \neq i'\;,
\end{align*}
where addition is in $\Z_4$.

This construction was thoroughly generalized by Cherlin, to Albert geometries. 
Here, we offer a different viewpoint of these graphs. Let $m,n \geq 4$ be integers.
An $m \times n$ zero-one matrix $M$ is said to be \emph{shattered} 
if the submatrix corresponding to any three rows or three columns 
contains all four possible patterns $aaa, aab, aba, baa$.
Namely, it must contain 
at least one of the strings $000$ and $111$, and one of $001$ and $110$, and so on.
The \emph{Albert graph} $A_M$ of a shattered matrix $M$ is obtained from an $m$-matching
and an $n$-matching.
The corresponding entries of $M$ tell us how to connect these $2m+2n$ vertices.
\begin{align*}
&V(A_M) = \{a_1,...,a_m\} \cup \{b_1,...,b_m\} \cup \{c_1,...,c_n\} \cup \{d_1,...,d_n\} \\
&a_i \sim b_i \;\; \text{ for all } i \\
&c_j \sim d_j \;\; \text{ for all } j \\
&a_i \sim c_j,\; b_i \sim d_j \;\; \text{ if } M_{ij}=1 \\
&a_i \sim d_j,\; b_i \sim c_j \;\; \text{ if } M_{ij}=0 \;.
\end{align*}

For example, when $M$ is the $4 \times 4$ identity matrix, $A_M$ is the Clebsch graph.
Albert's construction corresponds to larger identity matrices.

\begin{prop}
If the matrix $M$ is shattered, then the Albert graph $A_M$ is 3ECTF.
\end{prop}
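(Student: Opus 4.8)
The plan is to invoke Corollary~\ref{eck} with $k=3$. Thus, once $A_M$ is shown to be triangle-free, it remains to verify three things: every independent set of size at most $3$ has a common neighbour; no two vertices have equal neighbourhoods; and $A_M\not\cong O_{3k-1}$ for every $k$. It is convenient to rename the vertices as $a_i=(i,1)$, $b_i=(i,0)$ on the left and $c_j=(j,1)$, $d_j=(j,0)$ on the right. Inspecting the four defining cases collapses the adjacency rule to a single formula: a left vertex $(i,v)$ and a right vertex $(j,w)$ are adjacent precisely when $M_{ij}=1$ and $v=w$, or $M_{ij}=0$ and $v\neq w$. The engine of the whole proof is the immediate \emph{pairing property}: since $M_{ij}$ equals exactly one of $v$ and $\bar v$, every left vertex is adjacent to exactly one of each matched pair $\{c_j,d_j\}$, and symmetrically every right vertex is adjacent to exactly one of each pair $\{a_i,b_i\}$.

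Triangle-freeness and the last two conditions need no shatteredness. A triangle must contain two vertices on the same side, and since the only same-side edges are the matchings $a_ib_i$ and $c_jd_j$, these two would form a matched pair while the third vertex, on the opposite side, is adjacent to both---contradicting the pairing property. For the twin-free condition I would count degrees: each left vertex has its partner together with exactly $n$ right-neighbours, hence degree $n+1$, and each right vertex has degree $m+1$; so vertices on opposite sides differ in their number of (say) left-neighbours, while two distinct same-side vertices already differ in their unique same-side neighbour. Finally, if $m\neq n$ then $A_M$ is not regular and so is none of the regular graphs $O_{3k-1}$; and if $m=n$ then $A_M$ is $(n+1)$-regular on $4n$ vertices, whereas $O_{3k-1}$ is $k$-regular on $3k-1$ vertices, and matching both parameters forces $n=2$, contrary to $n\geq 4$.

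The substance lies in the common-neighbour condition, which I would split according to how the set $S$, with $|S|\leq 3$, meets the two sides. If $S$ meets both sides then, as $|S|\leq 3$, some side carries a single vertex $z$; let $z'$ be its matching partner, which lies outside $S$. Every vertex of $S$ opposite to $z$ is adjacent to exactly one of $z,z'$ by the pairing property, and being non-adjacent to $z$ it is adjacent to $z'$; since also $z'\sim z$, the vertex $z'$ is a common neighbour. This settles every mixed set using only the matching structure. The remaining sets are \emph{monochromatic}. Singletons are trivial (the matching partner serves), so suppose $S$ consists of $t\in\{2,3\}$ left vertices, necessarily on distinct rows $i_1,\dots,i_t$ with prescribed values $(p_1,\dots,p_t)$; a common neighbour must be a right vertex, and $(k,w)$ works exactly when the $k$-th column restricted to rows $i_1,\dots,i_t$ equals $(p_1,\dots,p_t)$ if $w=1$, or its complement if $w=0$. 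Hence a common neighbour exists iff some column restricts to $(p_1,\dots,p_t)$ or to its bitwise complement. This is exactly what shatteredness provides: its four patterns $aaa,aab,aba,baa$ are precisely the four complement-classes of $\{0,1\}^3$, so every $3$-pattern occurs up to complementation (and the case $t=2$ follows by adjoining a third row, using $m\geq 3$). Monochromatic right sets are identical after transposing $M$, which is again shattered.

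The main obstacle is thus confined to the monochromatic case, where having a common neighbour is equivalent to a prescribed pattern occurring in $M$ up to complementation; the key realisation is that the four required patterns $aaa,aab,aba,baa$ form a complete system of complement representatives in $\{0,1\}^3$, which is exactly the content of shatteredness on three rows (or columns). Every other case collapses to the pairing property, and the left/right symmetry given by transposition means only one orientation of the monochromatic argument needs to be carried out in detail.
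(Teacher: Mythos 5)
Your proof is correct and follows essentially the same route as the paper's: triangle-freeness and the mixed independent sets are handled via the matching/pairing structure (the partner of the lone vertex on one side is the common neighbour), while shatteredness supplies the common neighbour for monochromatic triplets, and the remaining conditions of Corollary~\ref{eck} are checked directly. You simply fill in details the paper dismisses as easy (twin-freeness, non-isomorphism to $O_{3k-1}$, the size-two case).
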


\begin{proof}
We first observe that $A_M$ is triangle-free.
Of any three vertices at least two must either come from 
$U = \{a_1,...,a_m\} \cup \{b_1,...,b_m\}$, 
or from $W = \{c_1,...,c_n\} \cup \{d_1,...,d_n\}$.
But an edge in $U$ must be of the form $(a_i,b_i)$,
and $a_i$ and $b_i$ have no common neighbors.

We proceed to verify the conditions in Corollary~\ref{eck}.
It is easy to see that $A_M$ is twin-free and not an $O_{3n-1}$.
Property $(Adj_3)$ follows since $M$ is shattered.
An independent triplet in $U$ must consist of either $a_i$ or $b_i$, 
either $a_j$ or $b_j$ and either $a_k$ or $b_k$ for some distinct $i,j,k$.
A common neighbor exists thanks to the appropriate pattern in the 
restriction of $M$ to the rows $i$, $j$, and $k$.
For an independent set with two vertices in $U$ and one in $W$,
the neighbor of the latter inside $W$ is adjacent also to the first two.
\end{proof}

The constructions of Albert Geometries in Examples 13.1 of~\cite{cherlin2011},
come from explicit shattered matrices.
Random matrices can be used as well.
Thus, simple counting arguments yield

\begin{cor}\label{many1}$ $
\begin{enumerate}
\item
Almost every zero-one $m \times n$ matrix with $m \ge C\log n$ and $n \ge C\log m$ 
is shattered. Here $C>0$ is some absolute constant.
\item
For even $n$, the number of $n$-vertex 3ECTF graphs is at least $2^{n^2/(16+o(1))}$.
\end{enumerate}
\end{cor}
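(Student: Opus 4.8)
\noindent
The plan is to treat the two parts in turn, using a first-moment argument for the shattering property and then a counting argument on top of it for the enumeration.

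For part (1) I would model a uniformly random zero-one $m\times n$ matrix as having i.i.d.\ fair-coin entries and bound the probability of failure by a union bound. Fix three rows $i_1,i_2,i_3$. Restricted to these rows, each column is a uniform string in $\{0,1\}^3$, and the four patterns $aaa,aab,aba,baa$ are exactly the four complementation-classes of such strings, each met by a given column with probability $1/4$. Hence a fixed class is avoided by all $n$ columns with probability $(3/4)^n$, and the probability that some class is missing for this row-triple is at most $4(3/4)^n$. Summing over the $\binom{m}{3}$ triples of rows and, symmetrically, over the $\binom{n}{3}$ triples of columns, the probability of not being shattered is at most $\binom{m}{3}4(3/4)^n + \binom{n}{3}4(3/4)^m$. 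Substituting $n\ge C\log m$ and $m\ge C\log n$, each summand is, up to constants, at most $m^{\,3-C\log(4/3)}$ respectively $n^{\,3-C\log(4/3)}$, so any absolute $C>3/\log(4/3)$ drives the total to $0$. This is the routine half.

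For part (2) I would instantiate the Albert construction with a balanced matrix. Given an even vertex count $n$, pick matrix dimensions $p,q$ with $p+q=n/2$ and $|p-q|\le 1$, so that $A_M$ has exactly $2p+2q=n$ vertices and $pq=\tfrac{n^2}{16}(1-o(1))$; since $p,q\sim n/4\to\infty$, the hypotheses $p\ge C\log q$ and $q\ge C\log p$ of part (1) hold for all large $n$ (in particular $p,q\ge 4$). By part (1) at least a $(1-o(1))$-fraction of the $2^{pq}$ matrices are shattered, and by the Proposition each such $M$ yields a 3ECTF graph on $n$ vertices. Because $M$ is recovered from the labeled graph via $M_{ij}=[a_i\sim c_j]$, distinct matrices give distinct labeled graphs, so there are at least $\tfrac12\,2^{pq}$ labeled 3ECTF graphs of this form.

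The one genuine subtlety---and the step I would treat most carefully---is the passage from labeled to unlabeled graphs, since the statement counts graphs up to isomorphism. Here I would use only the crude bound that each isomorphism class contains at most $n!$ labelings, so the number of non-isomorphic graphs is at least $\tfrac12\,2^{pq}/n!$. Taking base-two logarithms gives $pq-1-\log_2 n! = \tfrac{n^2}{16}(1-o(1))-O(n\log n)=\tfrac{n^2}{16}(1-o(1))$, and since $\tfrac{n^2}{16}(1-o(1))=\tfrac{n^2}{16+o(1)}$, the bound $2^{n^2/(16+o(1))}$ follows. The point worth stressing is that both the symmetry factor $n!=2^{O(n\log n)}$ and the loss incurred by balancing $p$ and $q$ are lower-order terms, and are precisely what the $o(1)$ in the denominator is there to absorb; in particular no analysis of the automorphism group $\mathrm{Aut}(A_M)$ is needed.
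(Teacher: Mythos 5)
Your proposal is correct and is precisely the ``simple counting argument'' the paper alludes to without spelling out: a first-moment/union bound over row and column triples (each triple misses a fixed complementation class with probability $(3/4)^n$ or $(3/4)^m$) for part (1), and for part (2) a balanced choice $p+q=n/2$ together with division by the crude symmetry factor $n!$, which is absorbed into the $o(1)$ in the exponent. Nothing further is needed.
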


This gives some insight on the possible behavior of 3ECTF graphs.
On the one hand, taking $m=n$ we get a regular graph of degree $|V|/4+1$.
On the other hand, if $m = \Theta(\log n)$ the graph
has $\Theta(\log|V|)$ vertices of degree close to $|V|/2$ 
while most vertices have only $\Theta(\log|V|)$ neighbors.
As for multiplicities, in every Albert graph $\mu_2(G) =  2$ and $\mu_3(G) =  1$.

\section{Hypercube Graphs}\label{hypersect}

Recall that the Clebsch graph is a Cayley graph of $\Z_2^4$, with
generators the unit vectors and the all-ones vector.
Equivalently, $x \sim y$ iff their Hamming distance $d(x,y)$ is $1$ or $4$.
Following Franek and R\"odl~\cite{franek1993},
we denote this graph by $\left\langle \Z_2^4, \{1,4\} \right\rangle$.
Note that it also equals $\left\langle \Z_2^4, \{3,4\} \right\rangle$.
Here, we consider the graphs
$$ C_{3k+1} = \left\langle \Z_2^{3k+1}, \{2k+1,2k+2,...,3k+1\} \right\rangle $$
which Erd\H{o}s~\cite{erdos1966} used in the study of Ramsey numbers.
As mentioned, $C_4$ is the Clebsch graph. 
The extension properties of these graphs were studied by Pach~\cite{pach1981}.
For future use, we record a variant of his argument in the following lemma.

\begin{lemma}\label{tri}
If  $x,y,z  \in \Z_2^n$ satisfy
$$
d(x,y) \leq a + b ,\;\;\;\;\;
d(x,z) \leq a + c ,\;\;\;\;\;
d(y,z) \leq b + c .
$$
for some integers $a, b, c \ge 0$,
then there is some $v \in \Z_2^n$ for which
$$
d(v,x) \leq a ,\;\;\;\;\;
d(v,y) \leq b ,\;\;\;\;\;
d(v,z) \leq c .
$$
\end{lemma}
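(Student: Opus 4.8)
The plan is to reduce everything to a coordinate count. Since Hamming distance is additive over coordinates, I would classify the $n$ coordinates according to the pattern of $(x_i,y_i,z_i)$. Writing $m=\operatorname{maj}(x,y,z)$ for the coordinatewise majority vote, every coordinate falls into exactly one of four classes: the $s$ coordinates on which $x_i=y_i=z_i$, the $p$ coordinates on which $x$ is the lone dissenter (so $y_i=z_i\neq x_i$), the $q$ coordinates on which $y$ dissents, and the $r$ coordinates on which $z$ dissents. Then $d(m,x)=p$, $d(m,y)=q$, $d(m,z)=r$, and since a coordinate is counted by $d(x,y)$ exactly when it lies in the $p$-class or the $q$-class, the hypotheses translate into
\[
p+q=d(x,y)\le a+b,\qquad p+r=d(x,z)\le a+c,\qquad q+r=d(y,z)\le b+c .
\]

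The crux is the following observation about the three ``deficits'' $a-p$, $b-q$, $c-r$: at most one of them is negative. Indeed, if two were negative, say $a-p<0$ and $b-q<0$, then $p+q>a+b$, contradicting the first translated hypothesis; the other two pairs are ruled out identically. This is precisely what will guarantee that a single, simple modification of the median point $m$ suffices.

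It then remains to build $v$ from $m$. The basic move is to flip one coordinate of $m$ so as to agree with the local minority: flipping a coordinate in the $p$-class decreases $d(v,x)$ by $1$ while increasing each of $d(v,y)$ and $d(v,z)$ by $1$, and symmetrically for the other two classes. If no deficit is negative, i.e.\ $p\le a$, $q\le b$, and $r\le c$, then $v=m$ already works. Otherwise, by the observation above exactly one deficit is negative; say $p>a$. I would then take $v$ to be $m$ flipped toward $x$ on exactly $p-a$ of the $p$-class coordinates. This yields $d(v,x)=p-(p-a)=a$, while $d(v,y)=q+(p-a)$ and $d(v,z)=r+(p-a)$, so that the required bounds $d(v,y)\le b$ and $d(v,z)\le c$ are exactly the hypotheses $p+q\le a+b$ and $p+r\le a+c$. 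The cases $q>b$ and $r>c$ are symmetric, by permuting the roles of $x,y,z$.

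The step I expect to require the most care is the ``at most one deficit is negative'' observation, together with the realization that one only ever needs to move $m$ toward a single vertex: a priori one might fear having to correct two overshooting distances simultaneously, which could conflict, but the pairwise hypotheses preclude two simultaneous overshoots. Everything else is a short, direct verification.
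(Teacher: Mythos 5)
Your proof is correct and is essentially the same argument as the paper's: both take the coordinatewise majority vector $m$, observe that the pairwise hypotheses force at most one of the three distances $d(m,x),d(m,y),d(m,z)$ to overshoot its target, and then flip the excess number of coordinates of $m$ toward the single offending vertex, with the same one-line verification that the other two distances stay within bounds. The only cosmetic difference is that the paper phrases this via the translated vectors $x+m,y+m,z+m$ having disjoint supports, whereas you count the coordinate classes $p,q,r$ directly.
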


\begin{proof}
Define the vector $m$ by the coordinate-wise majority vote of $x$, $y$, and $z$. 
Note that the three vectors $x'=x+m$, $y'=y+m$, and $z'=z+m$ have disjoint supports. 
We find a $v'$ satisfying the claim for these three vectors and let $v=v'+m$.

If the Hamming weights satisfy $w(x') \le a$, $w(y')\le b$ and $w(z')\le c$, then
we are done by taking $v'=0$. 
Otherwise, by assumption, at most one of these inequalities can be violated,
say $w(x') > a$. We take $v'$ to have weight $w(v') = w(x') - a$ and satisfy $x' \ge v'$ 
coordinate-wise.
Obviously $d(v',x') \leq a$. But also,
$$ d(v',y') \leq w(v') + w(y') = w(x') - a + w(y') = d(x',y') - a \leq b ,$$
and similarly $d(v',z') \leq c$.
\end{proof}

\begin{prop}\label{hc1}
The graphs $C_{3k+1}$ are 3ECTF.
\end{prop}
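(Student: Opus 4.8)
The plan is to verify the three conditions of Corollary~\ref{eck}, namely $(Adj_3)$, twin-freeness, and non-isomorphism to any $O_{3n-1}$, for the Cayley graph $C_{3k+1} = \left\langle \Z_2^{3k+1}, \{2k+1,\ldots,3k+1\} \right\rangle$. Before that, I must check that $C_{3k+1}$ is triangle-free. Since this is a Cayley graph on $\Z_2^{3k+1}$, adjacency $x \sim y$ means $2k+1 \le d(x,y) \le 3k+1$, and a triangle on $x,y,z$ would require all three pairwise distances to lie in this range. The key identity is that in $\Z_2^n$ the three distances $d(x,y)$, $d(x,z)$, $d(y,z)$ always sum to an even number (each coordinate contributes $0$ or $2$ to the sum). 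If all three distances were at least $2k+1$, their sum would be at least $3(2k+1)=6k+3$, but the maximum possible sum in $\Z_2^{3k+1}$ is bounded because the three ``difference vectors'' decompose the coordinates; I expect the triangle-free check to reduce to a short counting argument showing $6k+3$ exceeds what is attainable.

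Next I would establish $(Adj_3)$, and this is where Lemma~\ref{tri} does the real work. Given an independent triple $x,y,z$, independence means each pairwise distance falls \emph{outside} $[2k+1,3k+1]$, so each is either $\le 2k$ or $\ge 3k+2$; by complementing coordinates (replacing a vertex by its antipode, which is a graph automorphism since the connection set is symmetric under adding the all-ones vector) I can normalize so that all three pairwise distances are small, i.e. $d(x,y),d(x,z),d(y,z) \le 2k$. The goal is a common neighbor $v$, meaning $2k+1 \le d(v,x),d(v,y),d(v,z) \le 3k+1$. I would apply Lemma~\ref{tri} with a suitable choice of the parameters $a,b,c$ (each around $k$) to the \emph{antipodes} of $x,y,z$ or to $x,y,z$ themselves, producing a vector whose distances to the three vertices are simultaneously controlled, and then verify that the resulting $v$ lands in the required adjacency window. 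The main obstacle, and the step I would spend the most care on, is bookkeeping the normalization: after flipping some of $x,y,z$ to their antipodes to make all distances small, I must track how the connection-set condition transforms, choose $a,b,c$ so that Lemma~\ref{tri} yields a $v$ at distance roughly between $2k+1$ and $3k+1$ from each original vertex, and confirm no pairwise distance is accidentally pushed out of range.

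Finally, twin-freeness and the non-$O_{3n-1}$ condition should be routine. Twin-freeness follows because two distinct vectors $u \neq w$ differ in some coordinate, and translation by a single generator vector (or exhibiting a vertex adjacent to exactly one of them) separates their neighborhoods; concretely, for $u \neq w$ one finds $v$ with $d(v,u)$ in the window but $d(v,w)$ outside it, using that the number of vectors at a given distance grows with $n$. The graph $C_{3k+1}$ is not isomorphic to any $O_{3n-1}$ since it is vertex-transitive of degree $\sum_{j=2k+1}^{3k+1}\binom{3k+1}{j}$, which is exponential in $k$, whereas $O_{3n-1}$ has only $3n-1$ vertices and degree $n$; a simple comparison of vertex count against degree rules out the isomorphism. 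Once all three conditions of Corollary~\ref{eck} are confirmed, the conclusion that $C_{3k+1}$ is 3ECTF follows immediately.
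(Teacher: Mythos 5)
Your overall strategy coincides with the paper's: check the conditions of Corollary~\ref{eck}, with Lemma~\ref{tri} carrying the weight of $(Adj_3)$. Two remarks. First, your triangle-freeness argument is a genuine (and valid) alternative: summing the three pairwise distances coordinate-by-coordinate gives at most $2(3k+1)=6k+2<3(2k+1)$, whereas the paper instead uses the antipode $\bar{y}=y+(1,\dots,1)$ and the triangle inequality $d(x,z)\le d(x,\bar y)+d(\bar y,z)\le 2k$; both are one-line arguments. Second, the ``normalization'' you flag as the main obstacle in the $(Adj_3)$ step is a non-issue: since the ambient cube is $\Z_2^{3k+1}$, no distance can exceed $3k+1$, so non-adjacency is \emph{exactly} $d\le 2k$ and the case $d\ge 3k+2$ is vacuous. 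This is fortunate, because the mechanism you propose for it is not sound as stated --- replacing a single vertex of the triple by its antipode is not an automorphism here, since the connection set $\{2k+1,\dots,3k+1\}$ is not closed under $d\mapsto 3k+1-d$ (it maps to $\{0,\dots,k\}$). With that removed, the step you leave open closes exactly as in the paper: apply Lemma~\ref{tri} with $a=b=c=k$ to $x,y,z$ to get $v$ with all three distances $\le k$, and then $\bar v$ is at distance $\ge 2k+1$ from each, hence a common neighbor. Twin-freeness is cleanest via the observation that $N(x)$ is the Hamming ball of radius $k$ about $\bar x$, so distinct vertices have distinct neighborhoods; your degree-versus-order comparison for ruling out $O_{3n-1}$ is fine.
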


\begin{proof}
To see that $C_{3k+1}$ is triangle-free, 
suppose $x \sim y \sim z$, then $ d(x,z) \leq d(x,\bar{y}) + d(\bar{y},z) \leq k + k = 2k$, 
where $\bar{y}$ is $y$'s antipode, namely $\bar{y} = y + (1,...,1)$.
Therefore $x \not \sim z$.

Now we check the conditions of Corollary~\ref{eck}. 
Clearly $N(x)$ is not equal to $N(y)$ for $x \neq y$, 
since both are distinct Hamming balls of the same radius.
Property $(Adj_3)$ follows using Lemma~\ref{tri}.
Indeed, if the set $\{x,y,z\}$ is independent, apply the lemma with $a=b=c=k$.
For $v$ as in the lemma, $\bar{v}$ is at least $2k+1$ away from each of the three vectors.
\end{proof}

\begin{prop}\label{hc2}
$\mu_2(C_{3k+1}) = \binom{2k}{k}$, $\mu_3(C_{3k+1}) = 1$.
\end{prop}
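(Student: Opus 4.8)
The plan is to exploit the vertex-transitivity of the Cayley graph $C_{3k+1}$ together with a complementation trick that turns ``far from'' into ``near''. Writing $n=3k+1$, adjacency means Hamming distance at least $2k+1$, so two vertices are nonadjacent precisely when their distance is at most $2k$. By translating I may assume one of the two vertices is $0$, and I record that $v$ is a common neighbor of $0$ and $y$ iff $w(v)\ge 2k+1$ and $w(v+y)\ge 2k+1$. Passing to the complement $u=v+(1,\dots,1)$ converts these into $w(u)\le k$ and $d(u,y)\le k$; hence the number of common neighbors of a nonadjacent pair at distance $d=w(y)$ equals $I(d):=|B(0,k)\cap B(y,k)|$, the number of binary vectors lying in both Hamming balls of radius $k$ about $0$ and $y$. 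In particular this count depends only on $d$, so $\mu_2(C_{3k+1})=\min_{1\le d\le 2k} I(d)$.

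It is a short count to evaluate the extreme case $d=2k$: writing a candidate $u$ with $p$ ones inside $\mathrm{supp}(y)$ and $q$ ones outside, the constraints $p+q\le k$ and $(2k-p)+q\le k$ force $q=0$ and $p=k$, so $I(2k)=\binom{2k}{k}$. It therefore remains to show $I(d)\ge\binom{2k}{k}$ for every $1\le d\le 2k$, which I will deduce from the monotonicity statement that $I(d)$ is non-increasing in $d$ on $\{0,1,\dots,2k\}$. Granting this, the minimum over nonadjacent pairs is attained at the largest distance $d=2k$ and equals $\binom{2k}{k}$, which is the first assertion.

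The monotonicity is the crux of the argument, and I expect the injection to be the one delicate point. To compare $I(d)$ with $I(d+1)$ I move a single coordinate out of the support: fix $j\notin\mathrm{supp}(y)$ and set $y'=y+e_j$, so $w(y')=d+1$. Using the identity $d(u,y')=d(u,y)+1-2u_j$, every $u\in B(0,k)\cap B(y',k)$ with $u_j=0$ automatically satisfies $d(u,y)\le k-1$ and hence already lies in $B(0,k)\cap B(y,k)$; the same is true of those with $u_j=1$ and $d(u,y)\le k$. The only problematic points are those with $u_j=1$ and $d(u,y)=k+1$, and for these I flip the $j$-th coordinate, $u\mapsto u+e_j$, which lands in $B(0,k)\cap B(y,k)$ since both the weight and the distance to $y$ then drop by one. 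A direct check shows the resulting map is injective: a flipped image and an untouched image can coincide only if some $u$ and $u+e_j$ both lie in the source, but then the distance bookkeeping contradicts the defining condition $d(u,y)=k+1$ of a flipped point. This yields $I(d)\ge I(d+1)$ and hence the desired monotonicity.

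For $\mu_3$, the lower bound $\mu_3(C_{3k+1})\ge 1$ is already contained in property $(Adj_3)$ established in Proposition~\ref{hc1} (equivalently, condition~(1) of Corollary~\ref{eck}). To see that this is tight I exhibit an independent triple with a unique common neighbor. Since $3k\le n$, I can choose $x,y,z$ with pairwise disjoint supports, each of weight $k$; then every pairwise distance equals $2k$, which is below the adjacency threshold $2k+1$, so the triple is independent. Applying the same complementation, a common neighbor corresponds to a vector $u$ with $d(u,x),d(u,y),d(u,z)\le k$; splitting $u$ across the three disjoint blocks and the single leftover coordinate and summing the three inequalities forces all four block-weights to vanish, so $u=0$ is uniquely determined and the triple has exactly one common neighbor. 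Hence $\mu_3(C_{3k+1})=1$. The only genuinely delicate step is the injection establishing monotonicity of $I$; everything else is a direct computation.
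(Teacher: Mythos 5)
Your proof is correct, but for the $\mu_2$ lower bound it takes a genuinely different route from the paper's. The paper works directly with vectors at distance exactly $2k+1$ from both endpoints: for a pair at distance $2t$ it exhibits $\binom{3k+1-2t}{k-t}\binom{2t}{t}$ explicit common neighbors (flipping a prescribed number of agreeing coordinates and splitting the disagreeing ones evenly), handles odd distances by a parallel count, and then argues that these explicit binomial expressions are decreasing in $t$ with value $\binom{2k}{k}$ at $t=k$. You instead complement to reduce the count to $I(d)=\bigl|B(0,k)\cap B(y,k)\bigr|$, compute the extreme case $I(2k)=\binom{2k}{k}$ exactly, and prove $I(d)\ge I(d+1)$ by an explicit injection (flipping the $j$-th coordinate of the boundary points with $u_j=1$ and $d(u,y)=k+1$); I checked the injection and the collision analysis, and both are sound, as is the existence of the required $j\notin\mathrm{supp}(y)$ since $d\le 2k<3k+1$. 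Your approach buys a uniform treatment of even and odd distances and avoids verifying monotonicity of a product of binomials, and it also makes the exactness of $\mu_2$ cleaner: the paper only lists ``some'' common neighbors, so the equality at $t=k$ rests on an implicit check that no others exist, which your block count at $d=2k$ does explicitly. The paper's approach, in exchange, gives concrete lower bounds $\binom{3k+1-2t}{k-t}\binom{2t}{t}$ for every intermediate distance, not just the minimum. Your $\mu_3$ argument (disjoint-support weight-$k$ triple, summing the three distance constraints to force the unique solution) is essentially identical to the paper's, merely phrased after complementation.
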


\begin{proof}
We check that every two nonadjacent vertices in $C_{3k+1}$ 
have at least $\binom{2k}{k}$ common neighbors.
Let $x$ and $y$ be two vectors of even Hamming distance $2t$ where $1 \leq t \leq k$.
We count some of the common neighbors of $x$ and $y$.
Of the $3k+1-2t$ coordinates in which they agree,
we flip $2k+1-t$ coordinates of our choice, and keep the other $k-t$ unchanged.
The remaining $2t$ coordinates are divided equally, $t$ as in $x$ and $t$ as in $y$.
Each resulting vector is at Hamming distance $2k+1$ from both $x$ and $y$, 
which hence have at least
$$ \binom{3k+1-2t}{k-t}\binom{2t}{t} $$ 
distinct joint neighbors.
This expression is decreasing in $t$, 
and hence always $\ge\binom{2k}{k}$, with equality for $t=k$.

For $d(x,y) = 2t-1$ odd, we argue similarly.
We flip $2k+1-(t-1)$ of the $3k+1-(2t-1)$ common coordinates, 
and divide the other $2t-1$ to $t$ and $t-1$.
This yields
$$ 2\binom{3k+1-(2t-1)}{k-t}\binom{2t-1}{t} $$
common neighbors. The minimum, $\binom{2k}{k}$, is again attained at $t=k$.

Finally, it is easy to demonstrate three vectors in $C_{3k+1}$ with a single joint neighbor.
Take three vectors, $x$, $y$, and $z$, of Hamming weight $k$ and disjoint supports.
For a common neighbor $v$ we have
$$ 3(2k+1) \leq d(x,v) + d(y,v) + d(z,v) \leq 2 \cdot 3k + 3 = 3(2k+1) $$
since in $3k$ coordinates at most two of the distances can contribute.
But equality is reached only by the all-ones vector $v = (1,...,1)$.
\end{proof}

By Propositions~\ref{hc1}--\ref{hc2},
the sequence $C_{3k+1}$ constitutes an example for 3ECTF graphs 
with $\mu_2(C_{3k+1}) \rightarrow \infty$. Here $\mu_2=n^{2/3-o(1)}$, where $n = 2^{3k+1}$ is the number of vertices.
Also, these graphs are $n^{\lambda-o(1)}$-regular, 
where $\lambda = \log_2 3 - \frac 23 \approx 0.918$.
A neighborhood of a vertex in these graphs is also a largest possible independent set.
See~\cite{erdos1966}.

Here are two variations where the 2-multiplicity and the degree are traded off.
We first apply Albert's idea to $C_{3k+1}$.
For $x \in \Z_2^k$ denote $parity(x) = \sum_{i=1}^{k} x_i \text{ mod } 2$.
We first partition $C_{3k+1}$ as follows.
\begin{align*}
&V_1 = \{x \in \Z_2^{3k+1}\;|\;parity(x)=x_1=x_2\} \\
&V_2 = \{x \in \Z_2^{3k+1}\;|\;parity(x)\neq x_1=x_2\} \\
&V_3 = \{x \in \Z_2^{3k+1}\;|\;parity(x)=x_1\neq x_2\} \\
&V_4 = \{x \in \Z_2^{3k+1}\;|\;parity(x)=x_2\neq x_1\}
\end{align*}
Of course, we may forget the first two coordinates, 
and regard the elements of each $V_i$ as $\Z_2^{3k-1}$.
The adjacencies within each $V_i$ 
correspond to Hamming distances $2k-1$ and $2k+1,...,3k-1$.
Also, two vertices from distinct $V_i$'s are adjacent iff
their Hamming distance is between $2k$ and $3k-1$.
This leads to the following definition of $C_{3k-1}(m)$.
\begin{align*}
&V(C_{3k-1}(m)) = \{(v,i) \;|\; v \in \Z_2^{3k-1}, 1 \leq i \leq m\} \\
&(v,i) \sim (u,i) \;\; \text{ if } d(v,u) \in \{2k-1,2k+1,2k+2,...,3k-1\} \\
&(v,i) \sim (u,j) \;\; \text{ if } i \neq j \text{ and } d(v,u) \in \{2k,2k+1,2k+2,...,3k-1\}
\end{align*}
By the above discussion $C_{3k-1}(4) = C_{3k+1}$, and $C_{3k-1}(m)$ is 3ECTF for $m \geq 4$,
since any three vertices belong to an isomorphic copy of $C_{3k+1}$.
For the same reason $\mu_2(C_{3k-1}(m)) \geq \binom{2k}{k}$,
and for constant $k$ and large $m$ the graph is $\Theta(n)$-regular.

To introduce the second variation, consider the following graph.
$$ C'_{4k} = \left\langle \Z_2^{4k}, \{1,3,5,...,2k-1,4k\} \right\rangle \;. $$
By considering the two matchings on odd-parity and on even-parity vectors
we see that this is an Albert graph. In fact, each odd distance $d$ 
can be separately replaced by $4k-d$ to
yield another Albert graph.

$C_{3k+1}$ and $C'_{4k}$ are the first and last members of a simple sequence of 3ECTF graphs,  
$C_{k,j}$ for $j \in \{1,...,k\}$. These graphs are also defined in terms of the Hamming metric on the hypercube. 
We describe these graphs without proving their properties, 
since this is not needed henceforth.
$$ C_{k,j} = \left\langle \Z_2^{3k+j}, \{2k+1,2k+3,...,2k+(2j-1)\} \cup \{2(k+j),2(k+j)+1,...,3k+j\} \right\rangle .$$

\section{Twisted Graphs}\label{twistsect}

Having seen 3ECTF graphs with unbounded $\mu_2$, 
we move to the next construction in search of many such graphs.

We start with variation of a 3ECTF construction from Section 13.2 of \cite{cherlin2011}.
Given positive integers $m_0$, $m_1$, $m_2$ and $m_3$, we define
the twisted graph $G(m_0,m_1,m_2,m_3)$ as follows.
\begin{align*}
&V(G(m_0,m_1,m_2,m_3)) = \{(i,j,x) \;|\; i \in \{0,1,2,3\}, 1 \leq j \leq m_i, x \in \Z_4 \} \\
&(i,j,x) \sim (i,j,x+1) \;\; \text{ for all } i,j,x \\
&(i,j,x) \sim (i,j',x+2) \;\; \text{ for all } i,x,j \neq j' \\
&(i,j,x) \sim (i',j',x+3) \;\; \text{ for all } x,j,j',
(i,i') \in \{(0,1),(0,2),(0,3),(1,2),(2,3),(3,1)\} \;.
\end{align*}
\emph{Remark:} the graph $G(1,m_1,m_2,m_3)$ differs from $G(m_1,m_2,m_3)$ 
in Example 13.3 of~\cite{cherlin2011}, unless we switch two edges inside $V_0$. 
We do not know how to place both graphs on a common ground.

The following proposition reveals some of the structure of these graphs. 
Although it is a special case of Proposition~\ref{gtmk}, we believe that it is easier to follow 
and would make the more complicated Proposition~\ref{gtmk} more transparent.
\begin{prop}\label{gmmmm}
For $m_0,m_1,m_2,m_3 \geq 2$, the graph $G(m_0,m_1,m_2,m_3)$ is 3ECTF.
\end{prop}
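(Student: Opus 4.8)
The plan is to verify the three conditions of Corollary~\ref{eck} together with triangle-freeness, exploiting two structural features built into the definition: the $\Z_4$ ``levels'' (the $x$-coordinate) and the tournament on the index set $\{0,1,2,3\}$ implicit in the list $\{(0,1),(0,2),(0,3),(1,2),(2,3),(3,1)\}$, under which every ordered pair is comparable. For each edge I record its \emph{shift}, the difference of the two $x$-coordinates read in a fixed direction. A within-fiber edge (same $i,j$) has shift $\pm 1$; an edge between two fibers of the same block has shift $2$; and a cross-block edge between blocks $i\to i'$ has shift $3$ read from $i$ and shift $1$ read from $i'$.

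For triangle-freeness I would trace the shifts around a putative triangle. Since the three shifts telescope to $0$ in $\Z_4$ while each individual shift lies in $\{1,2,3\}$, the only possible multisets of shifts are $\{1,1,2\}$ and $\{2,3,3\}$; in particular a triangle must contain \emph{exactly one} shift-$2$ edge. That edge forces two of the vertices into a common block $\alpha$ with distinct fiber indices. Examining the third vertex then yields a contradiction in $\Z_4$ in every case: if it lies in block $\alpha$ as well, the two remaining (shift $\pm1$) edges are within-fiber and force a single fiber index, contradicting the distinctness; and if it lies in another block $\beta$, both remaining edges are $\alpha$--$\beta$ edges read from $\alpha$ with the same shift, so they assign its level the incompatible values $x_P+1$ and $x_Q+1=x_P+3$ (in the $\{1,1,2\}$ case). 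Alternatively one may simply note that each block, with its $\pm1$ and $2$ shifts, is an Albert graph $A(m_\alpha)$ and hence triangle-free, which disposes of the single-block subcases at once.

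Twin-freeness and non-isomorphism to $O_{3n-1}$ are quick. For the latter, $G$ is regular of degree $1+\sum_i m_i$ on $4\sum_i m_i$ vertices, whereas $O_{3n-1}$ is $n$-regular on $3n-1$ vertices; equating these forces $\sum_i m_i=2$, impossible since each $m_i\ge 2$. For twin-freeness I would compare two neighborhoods restricted to a single block: a vertex inside block $i$ contributes to that block two fiber-mates at levels $x\pm1$ together with $m_i-1$ vertices at level $x+2$, while a vertex outside block $i$ contributes exactly one vertex in every fiber of block $i$, all at a single level. Since $m_i\ge 2$ these patterns never coincide, so only vertices of the same block could be twins, and within a block the two level-$(x\pm1)$ fiber-mates already pin down the vertex.

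The main work, and the step I expect to be the real obstacle, is property $(Adj_3)$: producing a common neighbor $v=(i_0,j_0,x_0)$ for an arbitrary independent triple $\{P,Q,R\}$. The key observation is that once the block $i_0$ is chosen, each target $T=(i_T,j_T,x_T)$ constrains $x_0$ simply: if $i_0\ne i_T$ the tournament fixes $x_0$ to a single value ($x_T+1$ or $x_T+3$), whereas if $i_0=i_T$ then any $x_0\ne x_T$ is allowed, at the cost of a constraint on $j_0$ (equal to $j_T$ when $x_0=x_T\pm1$, different from $j_T$ when $x_0=x_T+2$). I would therefore organize the argument by the number of distinct blocks among $i_P,i_Q,i_R$ and, in each configuration, pick $i_0$ so that the induced constraints on $x_0$ are consistent in $\Z_4$, then use $m_{i_0}\ge 2$ to choose a legal $j_0$. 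The delicate cases are those where several targets share a block, so that the shift-$2$ option competes with the $\pm1$ options for the single index $j_0$, and especially where all three targets lie in one block, where the common neighbor may have to be taken in a different block. Verifying that at least one of the four choices of $i_0$ always yields a single consistent level $x_0$ is exactly where the specific tournament on $\{0,1,2,3\}$, the independence of the triple, and the hypothesis $m_i\ge 2$ are all needed, and is the heart of the proof.
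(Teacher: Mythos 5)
Your framework (Corollary~\ref{eck} plus a shift/level analysis) is sound, and your triangle-freeness, twin-freeness and non-$O_{3n-1}$ arguments are correct. But the proof of $(Adj_3)$ is not actually there: you correctly identify the choice of the block $i_0$ and the consistency of the induced constraints on $x_0$ as ``the heart of the proof,'' and then you stop. That verification is genuinely nontrivial and is where the proposition lives or dies, so as written this is a plan rather than a proof.

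Two concrete things are missing. First, the paper disposes of every triple meeting at most two blocks in one stroke by observing that $V_i \cup V_{i'}$ is isomorphic to the Albert graph $A(m_i+m_{i'})$ (relabel the second block by $x \mapsto x+1$, which turns the shift-$3$ cross edges into shift-$2$ ones), and $m_i+m_{i'} \geq 4$ makes that graph 3ECTF. This matters because $A(m_i)$ alone is \emph{not} 3ECTF when $m_i \in \{2,3\}$ --- your remark that for three targets in one block ``the common neighbor may have to be taken in a different block'' is exactly right, and without the two-block isomorphism you would have to resolve those cases by hand. Second, for a triple spread over three distinct blocks, the existence of a consistent level $x_0$ is not automatic for any of the three blocks involved: the paper's argument splits on the parities of $x,x',x''$, and in the residual case (e.g.\ $(i,i',i'')=(1,2,3)$ with $x=x'=x''$) no choice of $i_0$ among the three given blocks works, and one must pass to the fourth block $V_{i'''}$, using the defining property of the tournament $T_4$ that each pair of vertices has exactly one common ``out/in pattern'' witness. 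Your outline gestures at both the tournament and the fourth block but does not exhibit the case analysis, so the central claim remains unproved.
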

\begin{proof}
For $i \in \{0,1,2,3\}$ let
$$V_i = \{(i,j,x) \in V(G) \;|\; x \in \Z_4, 1 \leq j \leq m_i \} .$$
Note that the restriction of $G$ to each $V_i$ is isomorphic to $A(m_i)$ (see Section~\ref{albertsect}).
Moreover, the subgraph induced on $V_i \cup V_{i'}$ is $A(m_i+m_{i'})$.
To see this, ''twist'' the $m_{i'}$ last $4$-cycles in $A(m_i+m_{i'})$, 
by sending $x \mapsto x+1 \bmod 4$. This affects only cross edges with $x+2 \to x+3$.
Therefore, we only need to consider triplets from distinct $V_i$'s. 

We first verify triangle-freeness. Along an edge of such a triangle, 
the $\Z_4$-coordinate changes from $x$ to $x \pm 3$, 
but three such numbers do not sum to zero $\bmod\;4$.

We next find a common neighbor for three independent vertices 
$(i,j,x)$, $(i',j',x')$, and $(i'',j'',x'')$ for distinct $i$, $i'$, and $i''$.
Suppose first that the parity of $x$ differs from those of $x'$ and $x''$. 
Which is the neighbor of $(i',j',x')$ in the $4$-cycle $(i,j,\cdot)$?
From parity considerations, and since $(i,j,x)\not\sim(i',j',x')$, it must be $(i,j,x+2)$. 
For similar reasons $(i,j,x+2)$ is the neighbor of $(i'',j'',x'')$ in that $4$-cycle. 
This implies that $(i,j''',x+2)$ is a common neighbor of 
$(i,j,x)$, $(i',j',x')$, and $(i'',j'',x'')$ for any $j''' \neq j$.

Now if $x$, $x'$ and $x''$ have equal parities,
then each of $(i',j',x')$ and $(i'',j'',x'')$ is adjacent to 
one of the elements of the set $\{(i,j,x-1),(i,j,x+1)\}$. 
If both are adjacent to the same one, it is the common neighbor and we are done. 
A similar solution may exist in different roles of the three vertices.
If all fails, then either $(i,i',i'') = (1,2,3)$ and $x=x'=x''$ or 
$(i,i',i'') \in \{(0,1,2),(0,2,3),(0,3,1)\}$ and $x=x''=x'+2$.
In this case, the common neighbor is in the fourth part $V_{i'''}$,
being $(i''',j''',x+1)$ and $(i''',j''',x+3)$, respectively, for any $j'''$.
\end{proof}

A \emph{tournament} is an orientation of a complete graph. 
Edges $(i,i') \in E(T)$ are denoted $i \rightarrow i'$.
Note that the set of ordered pairs $\{(0,1),(0,2),(0,3),(1,2),(2,3),(3,1)\}$,
that appears in the above definition of $G(m_0,m_1,m_2,m_3)$,
can be encoded by the following tournament that we call $T_4$.
\begin{center}
\begin{tikzpicture}
[
main node/.style={circle,draw,thick,font=\sffamily\small},
main edge/.style={->,shorten <=2pt,shorten >=2pt,>=stealth',node distance=1cm,thick},
]
\node[main node] at (0,2) (1) {1};
\node[main node] at (0,0.75) (0) {0};
\node[main node] at (-1.25,0) (2) {2};
\node[main node] at (1.25,0) (3) {3};
\path[main edge]
    (0) edge node {} (1)
    (0) edge node {} (2)
    (0) edge node {} (3)
    (1) edge node {} (2)
    (2) edge node {} (3)
    (3) edge node {} (1);
\end{tikzpicture}
\end{center}
The tournament $T_4'$ is obtained from $T_4$ by reversing all edges. 
These two four-vertex tournaments are characterized by the property 
that every two vertices are connected by exactly one path of length two. 
In other words, 
for each pair of vertices one remaining vertex is attached to them in the same way,
while the other one is attached in opposite ways.
It is exactly this property that allowed us 
to find the common neighbor in the previous argument. 
A tournament is \emph{shattered} if every three of its vertices
extend to a copy of either $T_4$ or $T_4'$.

We next associate a graph $G_T(m)$ to a shattered tournament $T$ and a positive integer $m$.
\begin{align*}
&V(G_T(m)) = \{(i,j,x) \;|\; i \in V(T), 1 \leq j \leq m, x \in \Z_4 \} \\
&(i,j,x) \sim (i,j,x+1) \;\; \text{ for all } i,j,x \\
&(i,j,x) \sim (i,j',x+2) \;\; \text{ for all } i,x,j \neq j' \\
&(i,j,x) \sim (i',j',x+3) \;\; \text{ for all } x,j,j',i \rightarrow i' \;.
\end{align*}
It is easy to apply the proof of Proposition~\ref{gmmmm} to general shattered tournaments 
and conclude the following broader statement.
\begin{cor}
The graph $G_T(m)$ is 3ECTF for every shattered tournament $T$ and every $m \geq 2$.
\end{cor}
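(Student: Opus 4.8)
The plan is to mirror the proof of Proposition~\ref{gmmmm}, verifying the three conditions of Corollary~\ref{eck} for $G_T(m)$ and isolating the single place where the specific tournament $T_4$ was used, so that the defining property of a shattered tournament can be inserted there. Write $V_i = \{(i,j,x) \mid 1 \le j \le m,\ x \in \Z_4\}$ for $i \in V(T)$. As in the special case, the subgraph induced on each part $V_i$ is a copy of $A(m)$, and the first step is to record that the subgraph induced on any two parts $V_i \cup V_{i'}$ is a copy of $A(2m)$. This follows exactly as before: between $i$ and $i'$ the tournament carries a single oriented edge, so all cross edges use the same $\Z_4$-offset $+3$, which the twist $x \mapsto x+1$ on one part converts into the offset $+2$ of an Albert graph. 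Since $m \ge 2$ forces $2m \ge 4$, this copy of $A(2m)$ is itself 3ECTF, so any independent triple meeting at most two parts already has a common neighbor inside $V_i \cup V_{i'}$. Twin-freeness and the exclusion of the circular graphs $O_{3n-1}$ are routine, as in the previous sections.

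For triangle-freeness I would repeat the $\Z_4$-counting argument verbatim: a triangle in three distinct parts forces three coordinate differences that telescope to $0$, yet each difference equals $\pm 3$ according to the orientation of the relevant tournament edge, and three such numbers never sum to $0 \bmod 4$; triangles inside one or two parts are excluded because $A(m)$ and $A(2m)$ are triangle-free. Thus only the $(Adj_3)$ condition for an independent triple $(i,j,x),(i',j',x'),(i'',j'',x'')$ lying in three distinct parts remains. Here I invoke the shattered hypothesis directly: it supplies a fourth vertex $i'''$ of $T$ for which $\{i,i',i'',i'''\}$ induces a copy of $T_4$ or $T_4'$.

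With this fourth part available I would run the same case split as in Proposition~\ref{gmmmm}. Suppose first that the parity of $x$ differs from those of $x'$ and $x''$; then the neighbor of each of $(i',j',x')$ and $(i'',j'',x'')$ inside the $4$-cycle $(i,j,\cdot)$ is forced, by parity and independence, to be the antipode $(i,j,x+2)$, so $(i,j''',x+2)$ with $j''' \ne j$ is a common neighbor. This step never mentions orientations and transfers unchanged, again using $m \ge 2$. When $x$, $x'$, $x''$ share a parity, I would first try to place the common neighbor on one of the three $4$-cycles, and only if every such attempt fails fall back on the vertex completing the $T_4$ or $T_4'$ copy, which then supplies a neighbor of the form $(i''',j''',x+1)$ or $(i''',j''',x+3)$ in the fourth part $V_{i'''}$. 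The mechanism is exactly the defining feature of $T_4$ and $T_4'$: for each pair among the four vertices, one of the remaining two is attached to the pair in the same way and the other in opposite ways. This is precisely what made the fourth-part vertex work in the special case, and the shattered hypothesis guarantees its presence in general.

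The step I expect to be the main obstacle is purely the bookkeeping of $\Z_4$-offsets in the equal-parity subcase: one must confirm that, whether the completion is a copy of $T_4$ or of $T_4'$ and in whichever of its symmetric roles $i'''$ sits, the candidate vertex is genuinely adjacent to all three given vertices with the correct offsets. Because $T_4$ and $T_4'$ are characterized by having exactly one directed path of length two between every pair of vertices, this collapses to the same finite verification already carried out for $T_4$, now read off the induced four-vertex subtournament rather than off the fixed edge set $\{(0,1),(0,2),(0,3),(1,2),(2,3),(3,1)\}$. Once that check is transcribed, Corollary~\ref{eck} yields that $G_T(m)$ is 3ECTF.
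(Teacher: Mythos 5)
Your proposal is correct and follows essentially the same route as the paper, which itself only remarks that the proof of Proposition~\ref{gmmmm} applies to general shattered tournaments: you decompose into the parts $V_i$, handle triples meeting at most two parts via the induced $A(2m)$, rerun the $\Z_4$-offset and parity arguments, and correctly isolate the equal-parity fallback as the one place where the shattered hypothesis (supplying a $T_4$ or $T_4'$ completion) replaces the explicit edge set of $T_4$. The remaining finite check of offsets that you flag is exactly the verification the paper leaves implicit.
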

The graphs that were constructed so far in this section have $\mu_2(G)=2$ and $\mu_3(G)=1$.
The next example integrates them with $C_{3k-1}(m)$, thus increasing the $2$-multiplicity.
To this end, we first define a similar {\em twist} function in the hypercube. 
For $x \in \Z_2^n$, define
$$ \tau((x_1,x_2,x_3,x_4,...,x_n)) = (x_2,x_1+1,x_3,x_4,...,x_n) .$$
Note that $\tau^4(x)=x$ for all $x \in \Z_2^n$.

We turn to extend this construction and associate a graph $G_T(m,k)$ 
with every shattered tournament $T$ and positive numbers $m, k$.
\begin{align*}
&V(G_T(m,k)) = \{(i,j,x) \;|\; i \in V(T), 1 \leq j \leq m, x \in \Z_2^{3k-1} \} \\
&(i,j,x) \sim (i,j,x') \;\;  
\text{ if } d(x,x') \in \{2k-1,2k+1,2k+2,...,3k-1\} \\
&(i,j,x) \sim (i,j',x') \;\; 
\text{ if } j \neq j' \text{ and } d(x,x') \in \{2k,2k+1,...,3k-1\} \\
&(i,j,x) \sim (i',j',x') \;\; \text{ if } i \rightarrow i' 
\text{ and } d(x,\tau(x')) \in \{2k,2k+1,...,3k-1\} \;.
\end{align*}
Note that $G_T(m)$ is isomorphic to $G_T(m,1)$.
\begin{prop}\label{gtmk}
For $m \geq 2$, $k \geq 1$, and a shattered tournament $T$, the graph $G_T(m,k)$ is 3ECTF.
\end{prop}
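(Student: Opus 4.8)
The plan is to verify the three conditions of Corollary~\ref{eck}---twin-freeness, non-isomorphism to any $O_{3n-1}$, and property $(Adj_3)$---and to check triangle-freeness separately. The backbone is a structural reduction to the hypercube graphs of Section~\ref{hypersect}. Writing $V_i=\{(i,j,x)\}$ for the part indexed by $i\in V(T)$, the induced subgraph on a single $V_i$ is precisely $C_{3k-1}(m)$. More usefully, for any edge $i\to i'$ the induced subgraph on $V_i\cup V_{i'}$ is isomorphic to $C_{3k-1}(2m)$: one applies the twist $x\mapsto\tau(x)$ to every vector of $V_{i'}$, which is legitimate because $\tau$ preserves Hamming distance, so it fixes the internal adjacencies of $V_{i'}$ while turning the cross condition $d(x,\tau(x'))\in\{2k,\dots,3k-1\}$ into the plain distance condition defining $C_{3k-1}(2m)$. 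Since a shattered tournament has at least four vertices and $2m\ge4$, this is a genuine $3$ECTF graph by Section~\ref{hypersect}.

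This reduction disposes of every configuration living in at most two parts. Twin-freeness and non-isomorphism to any $O_{3n-1}$ are routine, exactly as in the earlier constructions, since distinct vertices are distinct Hamming balls. Any triangle, or any independent triple, contained in two parts sits inside an induced copy of $C_{3k-1}(2m)$, so no triangle arises there and a common neighbor is already supplied there, and being an induced subgraph it remains a common neighbor in $G_T(m,k)$. It then remains to treat triples meeting three distinct parts. For triangle-freeness across three parts I would argue directly: from three mutual adjacencies I rewrite each edge $i_s\to i_t$ as $d(x_s,\tau(x_t))\ge 2k$, align the powers of $\tau$ using that it is an isometry, and then combine the antipodal identity $d(a,b)\ge 2k\Leftrightarrow d(\bar a,b)\le k-1$ with $d(x,\tau(x))=1$ to force $d\le 2k-1$ on an edge that should satisfy $d\ge 2k$; carrying this out once for the cyclic and once for the transitive orientation of the three parts rules out all such triangles.

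The heart of the matter is property $(Adj_3)$ for an independent triple in three distinct parts $i_1,i_2,i_3$. I would first record a uniform reformulation: for a prospective common neighbor $(i_*,j_*,v)$, adjacency to $(i_s,j_s,x_s)$ amounts to $d(v,y_s)\in\{2k,\dots,3k-1\}$, where $y_s$ equals $x_s$, $\tau(x_s)$ or $\tau^{-1}(x_s)$ according to whether $i_*=i_s$, $i_*\to i_s$ or $i_s\to i_*$, and where the same-part same-$j$ case additionally admits the value $2k-1$. Taking $v=\bar w$, it suffices to find $w$ within Hamming distance $k-1$ of all three $y_s$, which by Lemma~\ref{tri} with $a=b=c=k-1$ is possible once the $y_s$ are pairwise at distance $\le 2k-2$. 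The shattered hypothesis lets me extend $\{i_1,i_2,i_3\}$ to a copy of $T_4$ or $T_4'$ and use its ``same-way/opposite-way'' dichotomy when choosing $i_*$: whenever $i_*$ is attached in the same way to a pair, the corresponding $d(y_s,y_t)$ collapses to the untwisted $d(x_s,x_t)$, which differs from the admissible $d(x_s,\tau(x_t))\le 2k-1$ by exactly one, since $\tau$ moves each point by $1$ and twisting flips the parity of the distance.

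The main obstacle is exactly this unit parity gap. Non-adjacency only bounds the twisted distances by $2k-1$, whereas Lemma~\ref{tri} wants $2k-2$; and no vertex can be simultaneously far ($\ge 2k$) from two targets whose transformed distance is $2k-1$, so a naive antipode of a near point need not exist. For a cyclically oriented triple there is a fourth part dominating, or dominated by, all three, which renders all three transformed distances untwisted at once and lets the parity be steered down to $2k-2$. A transitively oriented triple is harder, since no single part is attached in the same way to all three pairs, so the transformed distances must mix twisted and untwisted values; here one instead seats the common neighbor in the same part as a suitable target with $j_*=j_s$, available as $m\ge2$, exploiting that this relaxes its threshold to include $2k-1$. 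Verifying that, for every orientation the shattered tournament induces on three parts, some admissible $(i_*,j_*,v)$ always exists---mirroring the case analysis in the proof of Proposition~\ref{gmmmm}---is the one genuinely laborious step.
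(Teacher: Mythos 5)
Your skeleton matches the paper's: reduce configurations lying in at most two parts to $C_{3k-1}(2m)$ via the isometry $\tau$, and for triples across three parts aim to apply Lemma~\ref{tri} with $a=b=c=k-1$ after taming the pairwise distances, using the fourth vertex supplied by the shattered hypothesis. But the one step you explicitly defer --- closing the ``unit parity gap'' --- is precisely the content of the proof, and the plan you sketch for it does not work. For a cyclic triple with a fourth vertex $i_4$ dominating all of $i_1,i_2,i_3$, the relevant pairwise distances become the untwisted $d(x_s,x_t)$; non-adjacency only gives $d(x_s,\tau(x_t))\le 2k-1$, hence $d(x_s,x_t)\le 2k$, and nothing ``steers the parity down'' to $2k-2$. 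Two points of $\Z_2^{3k-1}$ at distance $2k-1$ or $2k$ have no common point within $k-1$ of both, so Lemma~\ref{tri} is simply unavailable there. The paper's resolution is different and does not split on cyclic versus transitive: it compares, for each pair, the distance $d_{st}$ seen from one endpoint's part with the distance $d^*_{st}$ seen from the third part, notes $d^*_{st}=d_{st}\pm1$, and disposes of the case $d^*_{st}\le 2k-2$ directly (common neighbor in $V_{i_3}$, with $a=k$, $b=c=k-1$, using the $j$-flexibility you mention). In the remaining case all three starred distances equal $d+1$, and a \emph{restricted parity} analysis of the first two coordinates shows each transformed pair differs in \emph{both} of those coordinates; the shattered tournament then supplies $i_4$ for which exactly one of $\tau_{41},\tau_{42}$ differs from $\tau_{31},\tau_{32}$, which makes the first two coordinates agree and drops the distance by exactly $2$, to $d-1\le 2k-2$. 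That two-coordinate bookkeeping is the missing idea; without it your case analysis cannot be completed.

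A secondary gap: your triangle-freeness argument across three parts is also only a sketch, and the triangle-inequality/antipode chase you describe does not close --- it yields three pairwise distances, each at least $2k-1$, summing to at least $6k-3$ against a capacity of $2(3k-1)=6k-2$, so no contradiction. The paper instead counts disagreements in the last $3k-3$ coordinates (each cross edge forces at least $2k-2$ of them, against a capacity of $2(3k-3)=6k-6$) and, in the equality case, finishes with the fact that $\tau$ flips parity. Your reduction of two-part configurations to $C_{3k-1}(2m)$ and your reformulation of adjacency via $y_s\in\{x_s,\tau(x_s),\tau^{-1}(x_s)\}$ are both correct and match the paper.
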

\begin{proof}
We adapt the proof of Proposition~\ref{gmmmm} to $k > 1$. 
For $i \in V(T)$, we partition the vertices as follows:
$$V_i = \{(i,j,x) \;|\; x \in \Z_2^{3k-1}, 1 \leq j \leq m \} .$$
The induced graph on each $V_i$ is isomorphic to $C_{3k-1}(m)$.
Since $\tau$ is an isometry, also $V_i \cup V_{i'}$ induces a copy of $C_{3k-1}(2m)$.
By our previous comments these subgraphs are 3ECTF.

We next show that three vertices from distinct parts never form a triangle.
For each edge between parts $(i,j,x) \sim (i',j',x')$ we have $d(x,\tau(x')) \geq 2k$.
Therefore at least $2k-2$ of the last $3k-3$ coordinates of $x$ and $x'$ differ.
Suppose, toward a contradiction, that a triangle of such three edges exists, 
and consider two cases.
\begin{enumerate}
\item 
Suppose that along some of the edges the Hamming distance of the last $3k-3$ coordinates 
is at least $2k-1$. Then the three differences cannot add up to $0$, 
because at least one of the last $3k-3$ coordinates flips three times.
\item 
If, on the other hand, they all equal $2k-2$, then in each such edge $d(x, \tau(x'))=2k$, 
and hence $\text{parity}(x)=\text{parity}(\tau(x'))$. 
But $\tau$ switches parity, so the parity changes three times around the triangle 
-- a contradiction.
\end{enumerate}

Next, we seek a common neighbor for the independent set 
$\{(i_1,j_1,x_1), (i_2,j_2,x_2), (i_3,j_3,x_3)\}$, with $i_1, i_2, i_3$ distinct.
Define $\tau_{12} = \tau$ if $i_1 \rightarrow i_2$, and $\tau_{12} = \tau^{-1}$ otherwise.
Correspondingly define $\tau_{23}$ and so on.
Also, define the following two sets of three distances.
\begin{align*}
d_{12} = d(x_1,\tau_{12}(x_2)) && d^*_{12} = d(\tau_{31}(x_1),\tau_{32}(x_2)) \\
d_{23} = d(x_2,\tau_{23}(x_3)) && d^*_{23} = d(\tau_{12}(x_2),\tau_{13}(x_3)) \\
d_{31} = d(x_3,\tau_{31}(x_1)) && d^*_{31} = d(\tau_{23}(x_3),\tau_{21}(x_1))
\end{align*}
By the independence assumption, $d_{12}, d_{23}, d_{31}\le 2k-1$.
Note that $d_{12}$ and $d^*_{12}$ are defined by the same two vectors
up to three applications of $\tau$ or $\tau^{-1}$, 
so that $d^*_{12} = d_{12} \pm 1$,  $d^*_{23} = d_{23} \pm 1$, and $d^*_{31} = d_{31} \pm 1$.

Let us assume first $d^*_{12} \leq 2k-2$.
Apply Lemma~\ref{tri} to the vectors $x_3$, $\tau_{31}(x_1)$, and $\tau_{32}(x_2)$, 
with $a = k$ and $b = c = k-1$ and obtain some $v \in \Z_2^{3k-1}$ with
\begin{align*} 
d(v,x_3) \leq k, && d(v,\tau_{31}(x_1)) \leq k-1, && d(v,\tau_{32}(x_2)) \leq k-1.
\end{align*}
If $d(v,x_3) = k$ then $(i_3,j_3,\bar{v})$ is a common neighbor, 
while for $d(v,x_3) = k-1$ it is $(i_3,j,\bar{v})$ for any $j \neq j_3$.
For $d(v,x_3) \leq k-2$ any $j$ would do. 

By applying the same argument to $d^*_{23}$ and $d^*_{31}$, 
we may assume that $d^*_{12}, d^*_{23}, d^*_{31}\ge 2k-1$.
In particular, they exceed $d_{12}$, $d_{23}$ and $d_{31}$ by one, respectively.

The \emph{restricted parity} of $x \in \Z_2^n$, 
is defined as the parity of its first two coordinates. 
We claim that, under the above assumption, 
$x_1$, $x_2$ and $x_3$ all have the same restricted parity.
Otherwise, say $x_3$ is the exception. This implies several further properties.
\begin{enumerate}
\item 
The vectors $\tau_{21}(x_1)$ and $\tau_{23}(x_3)$ have different restricted parity.
\item 
Since $d^*_{12} = d_{12} + 1$,
at least one of the first two coordinates of
$\tau_{31}(x_1)$ and $\tau_{32}(x_2)$ that appear in the definition of $d^*_{12}$ must differ.
But then, from restricted parity considerations, they both differ.
\item 
Also by restricted parity and by the previous property, 
$x_3$ agrees either with $\tau_{31}(x_1)$ or with $\tau_{32}(x_2)$
on the first two coordinates, and disagrees on them with the other one.
Say it disagrees with $\tau_{31}(x_1)$.
\end{enumerate}
Properties 1 and 3 imply that $d^*_{31} < d_{31}$, an already settled case.
Therefore, the three vectors must have the same restricted parity.
Now, by the reasoning of property 2, each of the three following vector pairs
differs in both of the first two coordinates.
\begin{align*}
\tau_{31}(x_1),\;\tau_{32}(x_2) && 
\tau_{12}(x_2),\;\tau_{13}(x_3) &&
\tau_{23}(x_3),\;\tau_{21}(x_1)
\end{align*}

Here the shattered tournament comes in. There is
an $i_4$ for which $\{i_1,i_2,i_3,i_4\}$ 
induces either a $T_4$ or a $T_4'$ tournament.
Therefore exactly one of $(\tau_{41},\tau_{42})$ 
differs from its counterpart in $(\tau_{31},\tau_{32})$.
Consequently, $\tau_{41}(x_1)$ and $\tau_{42}(x_2)$ agree on their first two coordinates.
Denoting their distance by $d^+_{12}$, we have
$$ d^+_{12} = d^*_{12} - 2 = d_{12} - 1 \leq 2k-2 ,$$
and likewise $d^+_{23}, d^+_{31}\le 2k-2$.
Apply Lemma~\ref{tri} to 
$\tau_{41}(x_1)$, $\tau_{42}(x_2)$, and $\tau_{43}(x_3)$ with $a=b=c=k-1$, 
to obtain $v \in \Z_2^{3k-1}$ at distance at most $k-1$ from each of the three.
This yields the desired common neighbor $(i_4,j,\bar{v})$ for any possible $j$.
\end{proof}

As every two vertices of $G_T(2,k)$ are covered by some embedded copy of $C_{3k+1}$, 
we have $\mu_2(G_T(2,k)) \geq \binom{2k}{k}$ by Proposition~\ref{hc2}.
One can verify that this is in fact an equality.
Since there are $2^{\Omega(n^2/2^{6k})}$ nonisomorphic tournaments on $n/2^{3k}$ vertices,
and for large $n$ almost all of them are shattered, Proposition~\ref{gtmk} yields

\begin{cor}\label{many2}
For every $\mu \in \N$, there are {\Large $$ 2^{\Omega\left(\frac{n^2}{(\mu\log\mu)^3}\right)}$$}3ECTF 
graphs with up to $n$ vertices,
in which every pair of independent vertices has at least $\mu$ common neighbors.
\end{cor}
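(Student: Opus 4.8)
The plan is to instantiate the construction $G_T(2,k)$ of Proposition~\ref{gtmk} for a well-chosen $k$ and to count how many pairwise non-isomorphic graphs arise as $T$ ranges over shattered tournaments. Given $\mu$, I would take $k$ to be the least integer with $\binom{2k}{k}\ge\mu$, also insisting $k\ge 2$. Then $k=\Theta(\log\mu)$, and the minimality of $k$ together with $\binom{2k}{k}=\Theta(4^k/\sqrt k)$ gives $\binom{2k}{k}=\Theta(\mu)$, hence $2^{6k}=O((\mu\log\mu)^3)$. For any shattered tournament $T$ on $N$ vertices, the graph $G_T(2,k)$ is 3ECTF by Proposition~\ref{gtmk}, has exactly $N\,2^{3k}$ vertices, and satisfies $\mu_2(G_T(2,k))\ge\binom{2k}{k}\ge\mu$ by the remark preceding the corollary (which invokes Proposition~\ref{hc2}). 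Choosing $N=\lfloor n/2^{3k}\rfloor$ keeps the vertex count at most $n$, so it remains to exhibit $2^{\Omega(N^2)}$ distinct such graphs.

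Next I would verify that almost every tournament on $N$ vertices is shattered. Fix a triple of vertices; for each of the remaining $N-3$ vertices, a short case check shows that a positive constant fraction (indeed exactly $2$ of the $8$ orientations of its three edges to the triple, regardless of whether the triple is cyclic or transitive) completes it to a copy of $T_4$ or $T_4'$. Thus in a uniform random tournament, once the internal orientation of the triple is fixed, each of these vertices is a valid fourth vertex with probability $1/4$, independently across them. Hence a fixed triple fails to extend with probability $(3/4)^{N-3}$, and a union bound over the $\binom N3$ triples shows the failure probability tends to $0$. In particular, at least $\tfrac12\,2^{\binom N2}$ labelled tournaments on $N$ vertices are shattered for large $N$.

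The crucial point is that these shattered tournaments yield many \emph{distinct} graphs. I would fix the vertex set as $V(T)\times\{1,2\}\times\Z_2^{3k-1}$ and argue that the map $T\mapsto G_T(2,k)$ is injective on labelled tournaments. Within each block $V_i$ the edges are independent of $T$, while the bipartite graph between blocks $V_i$ and $V_{i'}$ equals $\{(x,x'):d(x,\tau(x'))\in\{2k,\dots,3k-1\}\}$ when $i\to i'$ and $\{(x,x'):d(x',\tau(x))\in\{2k,\dots,3k-1\}\}$ when $i'\to i$. Since $\tau$ has order $4$ with $\tau^2\neq\mathrm{id}$, these two relations differ for $k\ge 2$: comparing the two conditions at $x'=0$ exhibits a pair $(x,0)$ lying in one relation but not the other, because swapping the roles of $x_1$ and $x_2$ shifts the Hamming weight across the threshold $2k$. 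Therefore reversing any single edge of $T$ changes the labelled graph, so distinct labelled tournaments give distinct labelled graphs, producing at least $\tfrac12\,2^{\binom N2}$ distinct labelled graphs, and hence at least $\tfrac12\,2^{\binom N2}/n!$ isomorphism classes.

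Finally, because $k$ is fixed once $\mu$ is fixed, we have $n=\Theta(N)$, so $\log_2 n!=O(n\log n)=O(N\log N)=o(N^2)$, giving $\binom N2-\log_2 n!=\Omega(N^2)$. The number of isomorphism classes is therefore $2^{\Omega(N^2)}=2^{\Omega(n^2/2^{6k})}=2^{\Omega(n^2/(\mu\log\mu)^3)}$, as required (for small $\mu$ or small $n$ the bound is a constant and is met by a single graph, after adjusting the implied constant). The main obstacle is precisely the injectivity step: one must ensure that the twist $\tau$ genuinely records the orientation of $T$ inside the graph, so that the abundance of shattered tournaments is not collapsed by coincidental graph isomorphisms. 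It is exactly the use of an order-$4$ twist, rather than an order-$2$ one, that makes the orientation recoverable and thus keeps the count large.
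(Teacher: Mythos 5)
Your proposal is correct and follows essentially the same route as the paper: instantiate $G_T(2,k)$ with $k=\Theta(\log\mu)$ chosen so that $\binom{2k}{k}\ge\mu$, and count the shattered tournaments on $\lfloor n/2^{3k}\rfloor$ vertices, almost all of which are shattered. The only difference is that you explicitly verify that distinct labelled tournaments yield distinct labelled graphs and then divide by $n!$, whereas the paper counts nonisomorphic tournaments directly and leaves the non-collapsing of the map $T\mapsto G_T(2,k)$ implicit; your version supplies that detail without changing the argument.
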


\bibliographystyle{abbrv}
{ \small
\bibliography{existentially-complete}
}

\end{document}